\newtheorem{theorem}{Theorem}[section]
\newtheorem{lemma}[theorem]{Lemma}
\newtheorem{proposition}[theorem]{Proposition}
\newtheorem{corollary}[theorem]{Corollary}
\theoremstyle{definition}
\newtheorem{definition}[theorem]{Definition}
\newtheorem{example}[theorem]{Example}
\theoremstyle{remark}
\newtheorem{remark}[theorem]{Remark}
\newtheorem{claim}{Claim}
\newtheorem*{claim*}{Claim}
\title{Reflections and torsion theories for selfinjective algebras}
\author{Hiroki Abe}
\date{}
\begin{document}

\maketitle


\begin{abstract}
We introduce the notion of reflections for selfinjective algebras from the point of view of torsion theories induced by two-term tilting complexes. As an application, we determine the transformations of Brauer trees associated with reflections. In particular, we provide a way to transform every Brauer tree into a Brauer line. 
\end{abstract}

\section{Introduction}


Reflection functors introduced in \cite{BGP} are induced by transformations of the quiver making a certain source vertex changed into a sink vertex. Let $\Lambda$ be a finite dimensional algebra over a field $K$. In \cite{APR}, it was shown that reflection functors are of the form $\mathrm{Hom}_{\Lambda}(T,-)$ with $T$ a certain type of tilting modules. Let $P_{1}, \cdots, P_{n}$ be a complete set of nonisomorphic indecomposable projective modules in $\mathrm{mod}\text{-}\Lambda$, the category of finitely generated right $\Lambda$-modules. Set $I=\{1, \cdots , n\}$. Assume that there exists a simple projective module $S \in \mathrm{mod}\text{-}\Lambda$ which is not injective. Take $t \in I$ with $P_{t} \cong S$ and set 
\[
T = T_{1} \oplus \tau^{-1}S \quad \text{with} \quad T_{1} = \bigoplus_{i \in I \setminus \{ t \} } P_{i}, 
\]
where $\tau$ denotes the Auslander-Reiten translation. Then $T$ is a tilting module, called an APR-tilting module, and $\mathrm{Hom}_{\Lambda}(T,-)$ is a reflection functor. \\
\indent
In \cite{BB}, APR-tilting modules were generalized as follows. Assume that there exists a simple module $S \in \mathrm{mod}\text{-}\Lambda$ such that $\mathrm{Ext}^{1}_{\Lambda}(S,S)=0$ and $\mathrm{Hom}_{\Lambda}(D\Lambda, S)=0$, where $D=\mathrm{Hom}_{K}(-,K)$. Let $P_{t}$ be the projective cover of $S$ and let $T$ be the same as above. Then $T$ is a tilting module, called a BB-tilting module. It is well-known that $T$ induces a torsion theory for $\mathrm{mod}\text{-}\Lambda$ whose torsion-free class is the full subcategory consisting of direct sums of copies of $S$. \\
\indent
We are interested in a minimal projective presentation of $T$, which is a two-term tilting complex. Take a minimal injective presentation $0 \to S \to E^{0} \overset{f}{\to} E^{1}$ and define a complex $E^{\bullet}$ as the mapping cone of $f: E^{0} \to E^{1}$. Then $\mathrm{Hom}^{\bullet}_{\Lambda}(D\Lambda, E^{\bullet})$ is a minimal projective presentation of $\tau^{-1}S$ and hence
\[
T^{\bullet} = T_{1} \oplus \mathrm{Hom}^{\bullet}_{\Lambda}(D\Lambda, E^{\bullet})
\]
is a minimal projective presentation of $T$. In this note, we demonstrate that this type of tilting complexes play an important role in the theory of derived equivalences for selfinjective algebras. \\
\indent
Consider the case where $\Lambda$ is selfinjective and $S \in \mathrm{mod}\text{-}\Lambda$ is a simple module with $\mathrm{Ext}^{1}_{\Lambda}(S,S)=0$ and $\mathrm{Hom}_{\Lambda}(D\Lambda, S) \cong S$. Let $E^{\bullet}$ and $T^{\bullet}$ be the same as above. We will show that $T^{\bullet}$ is a tilting complex and $T^{\bullet} \cong T_{1} \oplus E^{\bullet}$. Also, we will show that $T^{\bullet}$ induces a torsion theory for $\mathrm{mod}\text{-}\Lambda$ whose torsion-free class is the full subcategory consisting of direct sums of copies of $S$. In this note, derived equivalences for selfinjective algebras induced by this type of tilting complexes are called reflections. Finally, as an application, we will determine the transformations of Brauer trees associated with reflections. \\
\indent
We refer to \cite{HR} for the definition and basic properties of tilting modules, to \cite{Har} and \cite{Ve} for basic results in the theory of derived categories and to \cite{Ri1} for definitions and basic properties of tilting complexes and derived equivalences.

\section{Tilting complex for selfinjective algebras}

Throughout this note, $K$ is a commutative artinian local ring and $\Lambda$ is an Artin $K$-algebra, i.e., $\Lambda$ is a ring endowed with a ring homomorphism $K \to \Lambda$ whose image is contained in the center of $\Lambda$ and $\Lambda$ is finitely generated as a $K$-module. We always assume that $\Lambda$ is connected, basic and not simple. We denote by $\mathrm{mod}\text{-}\Lambda$ the category of finitely generated right $\Lambda$-modules and by $\mathcal{P}_{\Lambda}$ the full subcategory of $\mathrm{mod}\text{-}\Lambda$ consisting of projective modules. For a module $M \in \mathrm{mod}\text{-}\Lambda$, we denote by $P(M)$ (resp., $E(M)$) the projective cover (resp., injective envelope) of $M$. We denote by $\mathscr{K}(\mathrm{mod}\text{-}\Lambda)$ the homotopy category of cochain complexes over $\mathrm{mod}\text{-}\Lambda$ and by $\mathscr{K}^{\mathrm{b}}(\mathcal{P}_{\Lambda})$ the full triangulated subcategory of $\mathscr{K}(\mathrm{mod}\text{-}\Lambda)$ consisting of bounded complexes over $\mathcal{P}_{\Lambda}$. We consider modules as complexes concentrated in degree zero. For an object $A$ in an additive category $\mathfrak{A}$, we denote by $\mathrm{add}(A)$ the full subcategory of $\mathfrak{A}$ consisting of direct summands of finite direct sums of copies of $A$. \\
\indent
Throughout the rest of this note, we assume that $\Lambda$ is selfinjective. Let $S \in \mathrm{mod}\text{-}\Lambda$ be a simple module with $\mathrm{Ext}^{1}_{\Lambda}(S,S)=0$ and $E(S) \cong P(S)$. Note that $E(S) \cong P(S)$ if and only if $\mathrm{Hom}_{\Lambda}(D\Lambda, S) \cong S$, where $D$ denotes the Matlis dual over $K$. Take a minimal injective presentation $0 \to S \to E^{0} \overset{f}{\to} E^{1}$ and define a complex $E^{\bullet} \in \mathscr{K}^{\mathrm{b}}(\mathcal{P}_{\Lambda})$ as the mapping cone of $f: E^{0} \to E^{1}$. Note that $E^{1}$ is the $0$th term of $E^{\bullet}$ and $E^{0}$ is the $(-1)$th term of $E^{\bullet}$. Let $P_{1}, \cdots, P_{n}$ be a complete set of nonisomorphic indecomposable modules in $\mathcal{P}_{\Lambda}$ and set $I= \{ 1, \cdots, n \}$. We assume that $n >1$. Take $t \in I$ with $P_{t} \cong P(S)$ and set
\[
T^{\bullet} = T_{1} \oplus E^{\bullet} \quad \text{with} \quad T_{1} = \bigoplus_{i \in I \setminus \{ t \} } P_{i}.
\]
The following holds. 

\begin{theorem}\label{tilting_complex}
The complex $T^{\bullet} \in \mathscr{K}^{\mathrm{b}}(\mathcal{P}_{\Lambda})$ is a tilting complex for $\Lambda$ and $\mathrm{End}_{\mathscr{K}(\mathrm{mod}\text{-}\Lambda)}(T^{\bullet})$ is a selfinjective Artin $K$-algebra whose Nakayama permutation coincides with that of $\Lambda$. 
\end{theorem}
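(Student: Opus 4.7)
The plan is to verify Rickard's two criteria for $T^\bullet$ to be a tilting complex, and then to combine Auslander--Reiten duality on $\mathscr{K}^{\mathrm{b}}(\mathcal{P}_\Lambda)$ with an analysis of how the Nakayama functor $\nu_\Lambda$ acts on each indecomposable summand of $T^\bullet$ to deduce the assertions on $\Gamma = \mathrm{End}_{\mathscr{K}(\mathrm{mod}\text{-}\Lambda)}(T^\bullet)$.

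To verify $\mathrm{Hom}_{\mathscr{K}}(T^\bullet, T^\bullet[n]) = 0$ for $n \neq 0$, I would split the Hom according to $T^\bullet = T_1 \oplus E^\bullet$. Since $\Lambda$ is selfinjective, $T_1$ is simultaneously projective and injective, which disposes of three of the four pieces quickly. The piece $\mathrm{Hom}(T_1, E^\bullet[n])$ equals $\mathrm{Hom}(T_1, H^n(E^\bullet))$, whose only potentially non-trivial value $\mathrm{Hom}(T_1, S)$ (at $n = -1$) vanishes because $T_1$ has no summand $P_t = P(S)$. Applying $\mathrm{Hom}(-, T_1)$ to the distinguished triangle $E^0 \xrightarrow{f} E^1 \to E^\bullet \to E^0[1]$ and using injectivity of $T_1$ identifies $\mathrm{Hom}(E^\bullet, T_1[1])$ with $\mathrm{Hom}(S, T_1)$, which vanishes because the hypothesis $E(S) = P(S)$ forces $t$ to be a fixed point of the Nakayama permutation of $\Lambda$, so no $P_i$ with $i \neq t$ has socle $S$. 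The critical fourth piece $\mathrm{Hom}(E^\bullet, E^\bullet[n])$ is handled by applying $\mathrm{Hom}(E^\bullet, -)$ to the same triangle, together with the computation that for any injective module $I$ in degree $0$, $\mathrm{Hom}_{\mathscr{K}}(E^\bullet, I[n])$ is non-zero only at $n = 0, 1$ with values $\mathrm{Hom}(C, I)$ and $\mathrm{Hom}(S, I)$ respectively, where $C = E^1/\mathrm{im}(f)$. This reduces the vanishing to $\mathrm{Hom}(S, E^1) = 0$ and $\mathrm{Hom}(C, S) = 0$; both follow from the fact that $E^1$ has no $P_t$-summand, itself a consequence of $\mathrm{Ext}^1(S, S) = 0$ via the multiplicity count in the minimal injective resolution of $S$ (for the second, one lifts along the surjection $E^1 \twoheadrightarrow C$). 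Generation of $\mathscr{K}^{\mathrm{b}}(\mathcal{P}_\Lambda)$ by $T^\bullet$ is immediate: $P_t = E^0$ enters $\langle T^\bullet \rangle$ through the same triangle once $E^1 \in \mathrm{add}(T_1)$ is noted.

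For the second half of the theorem, the Auslander--Reiten duality $D\mathrm{Hom}_{\mathscr{K}}(X^\bullet, Y^\bullet) \cong \mathrm{Hom}_{\mathscr{K}}(Y^\bullet, \nu_\Lambda X^\bullet)$ on $\mathscr{K}^{\mathrm{b}}(\mathcal{P}_\Lambda)$ gives $D\Gamma \cong \mathrm{Hom}_{\mathscr{K}}(T^\bullet, \nu_\Lambda T^\bullet)$, so it suffices to show $\nu_\Lambda T^\bullet \cong T^\bullet$ with the induced permutation of indecomposable summands equal to the Nakayama permutation of $\Lambda$. For $i \neq t$, $\nu_\Lambda(P_i) = P_{\nu_\Lambda(i)}$ is another summand of $T_1$ because $\nu_\Lambda(t) = t$. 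The step I expect to be the main obstacle is the identification $\nu_\Lambda(E^\bullet) \cong E^\bullet$. Here I would exploit that $\nu_\Lambda$ is an exact autoequivalence of $\mathrm{mod}\text{-}\Lambda$ (since $D\Lambda$ is $\Lambda$-projective), and hence preserves radicals, simples, and injective envelopes; in particular $\nu_\Lambda(S) \cong S$ (from $\nu_\Lambda(t) = t$). Applying $\nu_\Lambda$ termwise to the minimal injective presentation $0 \to S \to E^0 \to E^1$ then yields another minimal injective presentation of $S$, and uniqueness of such presentations supplies a commutative square identifying $\nu_\Lambda(E^\bullet)$ with $E^\bullet$ in $\mathscr{K}^{\mathrm{b}}(\mathcal{P}_\Lambda)$. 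Reading off the induced permutation of summands of $T^\bullet$ simultaneously yields selfinjectivity of $\Gamma$ and the coincidence of its Nakayama permutation with that of $\Lambda$.
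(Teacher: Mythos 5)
Your proposal is correct, and it follows the same overall skeleton as the paper (split $\mathrm{Hom}(T^{\bullet},T^{\bullet}[n])$ into the four pieces coming from $T^{\bullet}=T_{1}\oplus E^{\bullet}$, then generate $P_{t}\cong E^{0}$ from the triangle $E^{0}\to E^{1}\to E^{\bullet}\to E^{0}[1]$ using that $E^{1}\in\mathrm{add}(T_{1})$). The interesting divergence is in how the two ``downward'' vanishings are obtained. The paper first establishes the Serre-duality isomorphism $\mathrm{Hom}_{\mathscr{K}}(X^{\bullet},E^{\bullet}[j])\cong D\mathrm{Hom}_{\mathscr{K}}(E^{\bullet},X^{\bullet}[-j])$ (resting on $\nu E^{\bullet}\cong E^{\bullet}$) and uses it to deduce $\mathrm{Hom}(E^{\bullet},E^{\bullet}[-1])=0$ and $\mathrm{Hom}(P_{i},E^{\bullet}[j])=0$ for free from the ``upward'' cases; you instead compute these directly, reducing $\mathrm{Hom}(E^{\bullet},E^{\bullet}[-1])$ via the long exact sequence to $\mathrm{Hom}(C,S)=0$ (with $C=\mathrm{cok}\,f$, using that $E^{1}$ has no $P_{t}$-summand) and $\mathrm{Hom}(T_{1},E^{\bullet}[-1])$ to $\mathrm{Hom}(T_{1},H^{-1}(E^{\bullet}))=\mathrm{Hom}(T_{1},S)=0$. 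Your route is more elementary and self-contained at this point, at the cost of a couple of extra diagram chases; the paper's route gets the same vanishings with no extra work once Claim~1 is in place, and Claim~1 is needed anyway for the second half of the theorem. For that second half (selfinjectivity of $\Gamma$ and the Nakayama permutation), the paper is terse --- it essentially records only the duality isomorphism and the observation $\nu E^{\bullet}\cong E^{\bullet}$, deferring to the cited reference --- whereas you spell out the derivation: $D\Gamma\cong\mathrm{Hom}(T^{\bullet},\nu T^{\bullet})$, $\nu$ permutes the summands of $T_{1}$ among themselves because $\nu(t)=t$, and $\nu E^{\bullet}\cong E^{\bullet}$ by applying the exact functor $\nu$ to the minimal injective presentation of $S\cong\nu S$ and invoking uniqueness. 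That is exactly the intended argument, and your version is the more complete of the two.
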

\begin{proof}
We use the notation $\mathrm{Hom}^{\bullet}(-,-)$ to denote the single complex associated with the double hom complex. For a subcategory $\mathfrak{S}$ of a triangulated category $\mathfrak{T}$, we denote by $\langle\mathfrak{S}\rangle$ the full triangulated subcategory of $\mathfrak{T}$ generated by $\mathfrak{S}$. 
\begin{claim}\label{serre_duality} 
For any $j \in \mathbb{Z}$ we have a functorial isomorphism 
\[
\mathrm{Hom}_{\mathscr{K}(\mathrm{mod}\text{-}\Lambda)}(X^{\bullet}, E^{\bullet}[j]) \cong D\mathrm{Hom}_{\mathscr{K}(\mathrm{mod}\text{-}\Lambda)}(E^{\bullet}, X^{\bullet}[-j])
\]
for $X^{\bullet} \in \mathscr{K}(\mathrm{mod}\text{-}\Lambda)$. 
\end{claim}
\begin{proof}
Set the Nakayama functor $\nu = - \otimes_{\Lambda}D\Lambda$. For $Q \in \mathcal{P}_{\Lambda}$ and $X \in \mathrm{mod}\text{-}\Lambda$, we have a bifunctorial isomorphism 
\[
X \otimes_{\Lambda}\mathrm{Hom}_{\Lambda}(Q,\Lambda) \overset{\sim}{\to} \mathrm{Hom}_{\Lambda}(Q,X), x \otimes h \mapsto (y \mapsto xh(y))
\]
and hence by adjointness we have bifunctorial isomorphisms 
\begin{align*}
D\mathrm{Hom}_{\Lambda}(Q,X) &\cong D(X \otimes_{\Lambda}\mathrm{Hom}_{\Lambda}(Q,\Lambda)) \\
 &\cong \mathrm{Hom}_{\Lambda}(X, \nu Q). 
\end{align*} 
Thus, for $Q^{\bullet} \in \mathscr{K}^{\mathrm{b}}(\mathcal{P}_{\Lambda})$ and $X^{\bullet} \in \mathscr{K}(\mathrm{mod}\text{-}\Lambda)$, we have a bifunctorial isomorphism in $\mathscr{K}(\mathrm{mod}\text{-}K)$ 
\[
D\mathrm{Hom}^{\bullet}_{\Lambda}(Q^{\bullet},X^{\bullet}) \cong \mathrm{Hom}^{\bullet}_{\Lambda}(X^{\bullet}, \nu Q^{\bullet})
\]
and hence we have bifunctorial isomorphisms  
\begin{align*}
D\mathrm{Hom}_{\mathscr{K}(\mathrm{mod}\text{-}\Lambda)}(Q^{\bullet},X^{\bullet}) &\cong D\mathrm{H}^{0}(\mathrm{Hom}^{\bullet}_{\Lambda}(Q^{\bullet},X^{\bullet}))  \\
 &\cong \mathrm{H}^{0}(D\mathrm{Hom}^{\bullet}_{\Lambda}(Q^{\bullet},X^{\bullet})) \\
 &\cong \mathrm{H}^{0}(\mathrm{Hom}^{\bullet}_{\Lambda}(X^{\bullet}, \nu Q^{\bullet})) \\
 &\cong \mathrm{Hom}_{\mathscr{K}(\mathrm{mod}\text{-}\Lambda)}(X^{\bullet}, \nu Q^{\bullet}). 
\end{align*} 
Now, since $E(S) \cong P(S)$ implies $E^{\bullet} \cong \nu E^{\bullet}$, for any $j \in \mathbb{Z}$ we have a functorial isomorphism 
\[
\mathrm{Hom}_{\mathscr{K}(\mathrm{mod}\text{-}\Lambda)}(X^{\bullet}, E^{\bullet}[j]) \cong D\mathrm{Hom}_{\mathscr{K}(\mathrm{mod}\text{-}\Lambda)}(E^{\bullet}, X^{\bullet}[-j])
\]
for $X^{\bullet} \in \mathscr{K}(\mathrm{mod}\text{-}\Lambda)$. 
\end{proof}

\begin{claim}\label{E_E}
$\mathrm{Hom}_{\mathscr{K}(\mathrm{mod}\text{-}\Lambda)}(E^{\bullet}, E^{\bullet}[j]) = 0$ for $j \ne 0$. 
\end{claim}
\begin{proof}
It is obvious that $\mathrm{Hom}_{\mathscr{K}(\mathrm{mod}\text{-}\Lambda)}(E^{\bullet}, E^{\bullet}[j]) = 0$ unless $-1 \le j \le 1$. Since $\mathrm{Hom}_{\Lambda}(S,E^{1}) \cong \mathrm{Ext}^{1}_{\Lambda}(S,S)=0$, the homomorphism $\mathrm{Hom}_{\Lambda}(f,E^{1})$ is surjective and hence every $g \in \mathrm{Hom}_{\Lambda}(E^{0},E^{1})$ factors through $f$, which implies $\mathrm{Hom}_{\mathscr{K}(\mathrm{mod}\text{-}\Lambda)}(E^{\bullet}, E^{\bullet}[1]) = 0$. It then follows by Claim \ref{serre_duality} that 
\begin{align*}
\mathrm{Hom}_{\mathscr{K}(\mathrm{mod}\text{-}\Lambda)}(E^{\bullet}, E^{\bullet}[-1]) &\cong D\mathrm{Hom}_{\mathscr{K}(\mathrm{mod}\text{-}\Lambda)}(E^{\bullet}, E^{\bullet}[1]) \\
 &= 0.
\end{align*} 
\end{proof}

\begin{claim}\label{E_P}
If $ i \ne t$, then $\mathrm{Hom}_{\mathscr{K}(\mathrm{mod}\text{-}\Lambda)}(E^{\bullet}, P_{i}[j]) = 0$ for $j \ne 0$. 
\end{claim}
\begin{proof}
It is obvious that $\mathrm{Hom}_{\mathscr{K}(\mathrm{mod}\text{-}\Lambda)}(E^{\bullet}, P_{i}[j]) = 0$ unless $0 \le j \le 1$. Also, since $\mathrm{Hom}_{\Lambda}(S, P_{i}) = 0$, and since $P_{i}$ is injective, $\mathrm{Hom}_{\Lambda}(f, P_{i})$ is surjective and $\mathrm{Hom}_{\mathscr{K}(\mathrm{mod}\text{-}\Lambda)}(E^{\bullet}, P_{i}[1]) = 0$.
\end{proof}

\begin{claim}\label{P_E}
If $ i \ne t$, then $\mathrm{Hom}_{\mathscr{K}(\mathrm{mod}\text{-}\Lambda)}(P_{i}, E^{\bullet}[j]) = 0$ for $j \ne 0$. 
\end{claim}
\begin{proof}
This follows by Claims \ref{serre_duality} and \ref{E_P}. 
\end{proof}

\begin{claim}\label{generate}
$\langle\mathrm{add}(T^{\bullet})\rangle = \mathscr{K}^{\mathrm{b}}(\mathcal{P}_{\Lambda})$. 
\end{claim}
\begin{proof}
Since $P_{i} \in \mathrm{add}(T^{\bullet})$ for $i \ne t$, we have only to show that $P_{t} \in \langle\mathrm{add}(T^{\bullet})\rangle$. Note that $P_{t} \cong P(S) \cong E^{0}$. Also, since $\mathrm{Ext}^{1}_{\Lambda}(S,S)=0$, $E^{0}$ is not a direct summand of $E^{1}$. Thus $E^{1} \in \mathrm{add}(T_{1}) \subset \mathrm{add}(T^{\bullet})$ and hence, since $E^{\bullet} \in \mathrm{add}(T^{\bullet})$, we have $E^{0} \in \langle\mathrm{add}(T^{\bullet})\rangle$. 
\end{proof}
This finishes the proof of Theorem \ref{tilting_complex}. 
\end{proof}

Next, we recall some definitions and results on stable torsion theories induced by two-term tilting complexes (see \cite{Ab} and \cite{HKM} for details).  Let $A$ be an Artin algebra. 

\begin{definition}\label{def_stable_torsion_theory}
A pair $(\mathcal{T},\mathcal{F})$ of full subcategories $\mathcal{T}$, $\mathcal{F}$ in $\mathrm{mod}\text{-}A$ is said to be {\it a torsion theory} for $\mathrm{mod}\text{-}A$ if the following conditions are satisfied.
\begin{enumerate}
\item[(1)] $\mathcal{T} \cap \mathcal{F} = \{ 0 \}$.  
\item[(2)] $\mathcal{T}$ is closed under factor modules. 
\item[(3)] $\mathcal{F}$ is closed under submodules. 
\item[(4)] For any $X \in \mathrm{mod}\text{-}A$, there exists an exact sequence $0 \to X^{\prime} \to X \to X^{\prime \prime} \to 0$ with $X^{\prime} \in \mathcal{T}$ and $X^{\prime \prime} \in \mathcal{F}$.
\end{enumerate}
In particular, $\mathcal{T}$ (resp., $\mathcal{F}$) is said to be a torsion (resp., torsion-free) class. If $\mathcal{T}$ is stable under the Nakayama functor $\nu = - \otimes_{A} DA$, then $(\mathcal{T},\mathcal{F})$ is said to be {\it stable}. If each indecomposable module in $\mathrm{mod}\text{-}A$ lies either in  $\mathcal{T}$ or in  $\mathcal{F}$, then $(\mathcal{T},\mathcal{F})$ is said to be {\it splitting}. 
\end{definition}

Let $P^{\bullet} \in \mathscr{K}^{\mathrm{b}}(\mathcal{P}_{A})$ be a two-term complex: 
\[
P^{\bullet} : \cdots \to 0 \to P^{-1} \overset{\alpha}{\to} P^{0} \to 0 \to \cdots .          
\]
We set the following subcategories in $\mathrm{mod}\text{-}A$: 
\[
\mathcal{T}(P^{\bullet}) = \mathrm{Ker}\ \mathrm{Hom}_{\mathscr{K}(\mathrm{mod}\text{-}A)}(P^{\bullet}[-1], - ) \cap \mathrm{mod}\text{-}A, 
\]
\[
\mathcal{F}(P^{\bullet}) = \mathrm{Ker}\ \mathrm{Hom}_{\mathscr{K}(\mathrm{mod}\text{-}A)}(P^{\bullet}, - ) \cap \mathrm{mod}\text{-}A. 
\]

\begin{proposition}[{\cite[Proposition 5.5]{HKM}}]\label{tilting_stable_torsion_theory}
The following are equivalent. 
\begin{enumerate}
\item[(1)] $P^{\bullet}$ is a tilting complex. 
\item[(2)] $(\mathcal{T}(P^{\bullet}),\mathcal{F}(P^{\bullet}))$ is a stable torsion theory for $\mathrm{mod}\text{-}A$. 
\end{enumerate}
\end{proposition}

\begin{proposition}\label{torsion_free_class}
Let $T^{\bullet}$ be the tilting complex constructed in Theorem \ref{tilting_complex}. Then we have a stable torsion theory $(\mathcal{T}(T^{\bullet}),\mathcal{F}(T^{\bullet}))$ for $\mathrm{mod}\text{-}\Lambda$ satisfying the following conditions. 
\begin{enumerate}
\item[(1)] $\mathcal{F}(T^{\bullet})=\mathrm{add}(S)$. 
\item[(2)] $P(S)$ lies neither in $\mathcal{T}(T^{\bullet})$ nor in $\mathcal{F}(T^{\bullet})$. In particular, $(\mathcal{T}(T^{\bullet}),\mathcal{F}(T^{\bullet}))$ is not splitting.  
\end{enumerate}
\end{proposition}
\begin{proof}
Since $T^{\bullet}$ is a two-term tilting complex of $\Lambda$, the pair $(\mathcal{T}(T^{\bullet}),\mathcal{F}(T^{\bullet}))$ is a stable torsion theory for $\mathrm{mod}\text{-}\Lambda$ by Proposition \ref{tilting_stable_torsion_theory}. \\
\indent
(1) Since $\mathrm{H}^{0}(T^{\bullet})=T_{1} \oplus {\tau}^{-1}S$ with $T_{1} = \bigoplus_{i \in I \setminus \{ t \} } P_{i}$ and $\mathrm{H}^{-1}({\nu}T^{\bullet})=S$, we have 
\begin{align*}
\mathcal{F}(T^{\bullet}) &= \mathrm{Ker}\ \mathrm{Hom}_{\mathscr{K}(\mathrm{mod}\text{-}\Lambda)}(T^{\bullet}, - ) \cap \mathrm{mod}\text{-}\Lambda \\
 &= \mathrm{Ker}\ \mathrm{Hom}_{\Lambda}(T_{1} \oplus {\tau}^{-1}S, - ) \\
 &= \mathrm{cog}(S), 
\end{align*}
where $\mathrm{cog}(S)$ is the full subcategory of $\mathrm{mod}\text{-}A$ whose objects are cogenerated by $S$. Since $S$ is simple, we have $\mathrm{cog}(S) = \mathrm{add}(S)$. The assertion follows. \\
\indent
(2) Assume first that $P(S) \in \mathcal{T}$. Since $\bigoplus_{i \in I \setminus \{ t \}} P_{i} = T_{1} \in \mathcal{T}$ and $P_{t}=P(S)$, $\mathcal{T}$ contains all indecomposable projective module in $\mathrm{mod}\text{-}\Lambda$ and hence $\mathcal{T} = \mathrm{mod}\text{-}\Lambda$. It is a contradiction by (1). Next, assume that $P(S) \in \mathcal{F}$. Then $P(S) \cong S$ by (1). Therefore $S$ is a simple projective module. It is a contradiction because $\Lambda$ is selfinjective and connected. 
\end{proof}

For a quiver $Q$, we denote by $\alpha_{n-1}\alpha_{n-2}\cdots\alpha_{1}$ the path in $Q$ 
\[
i_{1} \overset{\alpha_{1}}{\to} \cdots \overset{\alpha_{n-2}}{\to} i_{n-1} \overset{\alpha_{n-1}}{\to} i_{n}.
\]
If $\Lambda$ is the path algebra defined by $Q$ over a field and $P_{i}$ is the indecomposable module in $\mathcal{P}_{\Lambda}$ corresponding to the vertex $i$, then we have a sequence of homomorphisms in $\mathcal{P}_{\Lambda}$ 
\[
P_{i_{1}} \overset{\alpha_{1}}{\to} \cdots \overset{\alpha_{n-2}}{\to} P_{i_{n-1}} \overset{\alpha_{n-1}}{\to} P_{i_{n}}
\]
such that $\alpha_{j} \in \mathrm{rad}(\Lambda) \setminus \mathrm{rad}^{2}(\Lambda)$ for $j=1, \cdots , n-1$.

\begin{example}\label{example_torsion_theory}
Let $\Lambda$ be the path algebra defined by the quiver 
\[
\xymatrix{
1 \ar [r]<2pt>^{\alpha_{1}} & 2 \ar [r]<2pt>^{\alpha_{2}} \ar [l]<2pt>^{\beta_{1}}  & 3 \ar [l]<2pt>^{\beta_{2}} 
}
\]
with relations $\alpha_{2}\alpha_{1}=\beta_{1}\beta_{2}=0$ and $\alpha_{1}\beta_{1} = \beta_{2}\alpha_{2}$. Then $\Lambda$ is selfinjective with $\nu=\mathrm{id}_{\mathrm{mod}\text{-}\Lambda}$ and the Auslander--Reiten quiver of $\Lambda$ is given by the following: 
\[
\begin{xy}
(-40,20) *{\begin{smallmatrix} \ 3 \ \end{smallmatrix}}="S3",
(-40,10) *{\begin{smallmatrix} 2 \\ 1 \ 3 \\ 2 \end{smallmatrix}}="P2",
(-50,0) *{\begin{smallmatrix} 1 \ 3 \\ 2 \end{smallmatrix}}="JP2",
(-40,-10) *{\begin{smallmatrix} \ 1\  \end{smallmatrix}}="S1",
(-30,0) *{\begin{smallmatrix} 2 \\ 1 \ 3 \end{smallmatrix}}="P2S",
(-20,10) *{\begin{smallmatrix} 2 \\ 1 \end{smallmatrix}}="JP1",
(-20,-10) *{\begin{smallmatrix} 2 \\ 3 \end{smallmatrix}}="JP3",
(-10,0) *{\begin{smallmatrix} \ 2 \ \end{smallmatrix}}="S2",
(-10,20) *{\begin{smallmatrix} 1 \\ 2 \\ 1 \end{smallmatrix}}="P1",
(-10,-20) *{\begin{smallmatrix} 3 \\ 2 \\ 3 \end{smallmatrix}}="P3",
(0,10) *{\begin{smallmatrix} 1 \\ 2 \end{smallmatrix}}="P1S",
(0,-10) *{\begin{smallmatrix} 3 \\ 2 \end{smallmatrix}}="P3S",
(10,0) *{\begin{smallmatrix} 1\ 3 \\ 2 \end{smallmatrix}}="REJP2",
\ar "JP2" ; "S3"  
\ar "JP2" ; "P2" 
\ar "JP2" ; "S1"
\ar "S3" ; "P2S" 
\ar "P2" ; "P2S" 
\ar "S1" ; "P2S" 
\ar@{.} "JP2" ; "P2S"
\ar "P2S" ; "JP1"  
\ar "P2S" ; "JP3"  
\ar "JP1" ; "S2" 
\ar "JP3" ; "S2" 
\ar "JP1" ; "P1" 
\ar "P1" ; "P1S" 
\ar "JP3" ; "P3"
\ar "P3" ; "P3S"  
\ar@{.} "P2S" ; "S2"
\ar@{.} "JP1" ; "P1S"
\ar@{.} "JP3" ; "P3S"
\ar "S2" ; "P1S"  
\ar "S2" ; "P3S"  
\ar "P1S" ; "REJP2" 
\ar "P3S" ; "REJP2"
\ar@{.} "S2" ; "REJP2"  
\end{xy}
\]
where each indecomposable module is represented by its composition factors and $\tau$-orbits are denoted by $\xymatrix{\bullet \ar@{.} [r] & \bullet}$. Since the exact sequence 
\[
0 \to \begin{smallmatrix} 1 \end{smallmatrix} \to \begin{smallmatrix} 1 \\ 2 \\ 1 \end{smallmatrix} \to \begin{smallmatrix} 2 \\ 1 \ 3 \\ 2 \end{smallmatrix}
\] 
give a minimal injective presentation of the simple module corresponding to the vertex $1$, we have a tilting complex $T^{\bullet}=T_{1} \oplus E^{\bullet}$ with 
\[
T_{1}= 0 \to \begin{smallmatrix} 2 \\ 1 \ 3 \\ 2 \end{smallmatrix} \oplus \begin{smallmatrix} 3 \\ 2 \\ 3 \end{smallmatrix} \quad \text{and} \quad E^{\bullet}= \begin{smallmatrix} 1 \\ 2 \\ 1  \end{smallmatrix} \to \begin{smallmatrix} 2 \\ 1 \ 3 \\ 2 \end{smallmatrix} 
\]
by Theorem \ref{tilting_complex}. 
It is not difficult to see that $T^{\bullet}$ induces the stable torsion theory
\[
\mathcal{T}(T^{\bullet})=\{\begin{smallmatrix} 2 \\ 1 \ 3 \\ 2 \end{smallmatrix}, \begin{smallmatrix} 2 \\ 1 \ 3 \end{smallmatrix}, \begin{smallmatrix} 2 \\ 3 \end{smallmatrix}, \begin{smallmatrix} 2 \end{smallmatrix}, \begin{smallmatrix} 3 \\ 2 \\ 3 \end{smallmatrix}, \begin{smallmatrix} 3 \\ 2 \end{smallmatrix} \} \quad \text{and} \quad \mathcal{F}(T^{\bullet}) = \{\begin{smallmatrix} 1 \end{smallmatrix} \},
\]
for $\mathrm{mod}\text{-}\Lambda$. Note that the indecomposable projective module $\begin{smallmatrix} 1 \\ 2 \\ 1  \end{smallmatrix}$ which is a projective cover of the simple module $\begin{smallmatrix} 1 \end{smallmatrix}$ lies neither in $\mathcal{T}(T^{\bullet})$ nor in $\mathcal{F}(T^{\bullet})$. 
\end{example}

A torsion theory whose torsion-free class is the full subcategory consisting of direct sums of copes of a simple module characterizes a reflection functor which induced by an APR-tilting module or a BB-tilting module. From the point of view of torsion theories, we introduce the notion of reflections for selfinjective algebras .  

\begin{definition}\label{reflection}
Let $T^{\bullet}$ be the tilting complex constructed in Theorem \ref{tilting_complex}. The derived equivalence induced by $T^{\bullet}$ is said to be the reflection for $\Lambda$ at $t$. Sometimes, we also say that $\mathrm{End}_{\mathscr{K}(\mathrm{mod}\text{-}\Lambda)}(T^{\bullet})$ is the reflection of $\Lambda$  at $t$. 
\end{definition}

\begin{example}\label{example_tilting}
Let $\Lambda$ be the path algebra defined by the quiver
\[
\xymatrix{
 & 2 \ar [ld]<2pt>^{\alpha_{1}} \ar [rd]<2pt>^{\beta_{2}} & \\
1 \ar [ru]<2pt>^{\beta_{1}} \ar [rd]<2pt>^{\alpha_{4}} & & 3 \ar [ld]<2pt>^{\beta_{3}} \ar[lu]<2pt> ^{\alpha_{2}}  \\
 & 4 \ar [lu]<2pt>^{\beta_{4}} \ar [ru]<2pt>^{\alpha_{3}}&
}
\]
with relations
\[
\beta_{2}\beta_{1}=\alpha_{3}\alpha_{4},\ \beta_{4}\beta_{3}=\alpha_{1}\alpha_{2},\ \beta_{1}\alpha_{1}=\alpha_{2}\beta_{2},\ \beta_{3}\alpha_{3}=\alpha_{4}\beta_{4},
 \]
\[
\alpha_{4}\alpha_{1}=\alpha_{2}\alpha_{3}=\beta_{1}\beta_{4}=\beta_{3}\beta_{2}=\alpha_{1}\beta_{1}=\beta_{2}\alpha_{2}=\alpha_{3}\beta_{3}=\beta_{4}\alpha_{4}=0.
\]
Then $\Lambda$ is selfinjective. For the simple module $S$ corresponding to the vertex $2$, we have $E(S) \cong P(S)$ and $\mathrm{Ext}^{1}_{\Lambda}(S,S)=0$. Thus by Theorem \ref{tilting_complex} we have a tilting complex $T^{\bullet}$ and another selfinjective algebra $\Gamma=\mathrm{End}_{\mathscr{K}(\mathrm{mod}\text{-}\Lambda)}(T^{\bullet})$ which is the reflection of $\Lambda$ at $2$. It is not difficult to see that $\Gamma$ is the path algebra defined by the quiver 
\[
\xymatrix{
 & 2^{\prime} \ar [dd]<-2pt>_{\gamma_{3}} \ar [dd]<2pt>^{\delta_{3}} & \\
1 \ar [ru]^{\gamma_{1}} & & 3 \ar [lu]_{\delta_{1}}  \\
 & 4 \ar [lu]^{\gamma_{2}} \ar [ru]_{\delta_{2}} &
}
\]
with relations
\[
\gamma_{1}\gamma_{2}\gamma_{3}=\delta_{1}\delta_{2}\delta_{3},\ \delta_{3}\gamma_{1}\gamma_{2}=\gamma_{3}\delta_{1}\delta_{2},
\]
\[
\gamma_{3}\gamma_{1}=\delta_{3}\delta_{1}=\gamma_{2}\delta_{3}=\delta_{2}\gamma_{3}=\gamma_{3}\delta_{1}\delta_{2}\delta_{3}=\delta_{3}\gamma_{1}\gamma_{2}\gamma_{3}=0,
\]
 where $2^{\prime}$ is the vertex corresponding to $E^{\bullet}$. 
\end{example}

\section{Brauer tree algebras}

Throughout this section, we assume that $K$ is an algebraically closed field. Recall that a Brauer tree $(B, v, m)$ consists of a finite tree $B$, called the underlying tree,  together with a distinguished vertex $v$, called the exceptional vertex and a positive integer $m$, called the multiplicity. In case $m=1$, $(B,v,m)$ is identified with the underlying tree $B$ and is called a Brauer tree without exceptional vertex. The pair of the number of edges of $B$ and the multiplicity $m$ is said to be the numerical invariants of $(B, v, m)$. Each Brauer tree determines a symmetric $K$-algebra $\Lambda$ up to Morita equivalence (see \cite{Alp} for details), called a Brauer tree algebra, which is given as the path algebra defined by some quiver with relations $(\Lambda_{0}, \Lambda_{1}, \rho)$, where $\Lambda_{0}$ is the set of vertices, $\Lambda_{1}$ is the set of arrows between vertices and $\rho$ is the set of relations (see \cite{GR} for details). We have the following.  

\begin{proposition}\label{der_eq_Brauer_tree}
Let $\Lambda$ be a Brauer tree algebra. Then every ring $\Gamma$ derived equivalent to $\Lambda$ is a Brauer tree algebra having the same numerical invariants as $\Lambda$. 
\end{proposition}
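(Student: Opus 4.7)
The plan is to show that $\Gamma$ inherits from $\Lambda$ the structural features defining a Brauer tree algebra, namely: symmetricity, connectedness, representation-finiteness, and matching numerical invariants.

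First I would establish that $\Gamma$ is a connected symmetric $K$-algebra. Symmetricity of an Artin algebra is preserved under derived equivalence (Rickard), so $\Gamma$ is symmetric. Since the center of an algebra is a derived invariant, and a symmetric algebra is connected if and only if its center is local, connectedness of $\Lambda$ passes to $\Gamma$.

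Second I would invoke Rickard's theorem that, between selfinjective algebras, a derived equivalence induces a stable equivalence of Morita type. Stable equivalence of Morita type preserves representation type, so since $\Lambda$ is representation-finite (every Brauer tree algebra is), so is $\Gamma$. Now by the Gabriel--Riedtmann classification of representation-finite selfinjective algebras, every connected representation-finite symmetric $K$-algebra over an algebraically closed field is Morita equivalent to a Brauer tree algebra; hence $\Gamma$ is itself a Brauer tree algebra.

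Finally I would read off the numerical invariants. The number of edges $e$ equals the number of nonisomorphic simple modules, which is a classical derived invariant. For symmetric algebras the determinant of the Cartan matrix is preserved under derived equivalence, and for a Brauer tree algebra with $e$ edges and multiplicity $m$ a short endpoint-multiplicity computation gives $\det C = em + 1$ (checkable on the Brauer star, where $C = I + mJ_{e}$). Thus the pair $(e,m)$ is recovered from the two derived invariants $\#\{\text{simples}\}$ and $\det C$, and these agree for $\Lambda$ and $\Gamma$.

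The main obstacle is the appeal to representation-finiteness as a derived invariant for selfinjective algebras, which rests on Rickard's nontrivial stable-equivalence theorem together with the Gabriel--Riedtmann classification; once that machinery is in place, identifying the numerical invariants is an essentially numerical matter.
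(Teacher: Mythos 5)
Your first two steps (symmetry and connectedness are derived invariants; derived equivalence between selfinjective algebras induces a stable equivalence by Rickard) match the paper's starting point, and your recovery of the numerical invariants from the number of simples and the Cartan determinant (with $\det C = em+1$, checked on the star) is a correct alternative to the paper's way of reading them off. However, the pivotal classification step you invoke is not correct as stated, and this is a genuine gap. It is \emph{not} true that every connected representation-finite symmetric algebra over an algebraically closed field is Morita equivalent to a Brauer tree algebra: in characteristic $2$ there exist the nonstandard (``modified'') Brauer tree algebras of Riedtmann--Wasch\-b\"usch, which are connected, symmetric and representation-finite but are not Brauer tree algebras. So ``symmetric $+$ connected $+$ representation-finite $\Rightarrow$ Brauer tree algebra'' fails, and your argument would need an additional input to exclude these algebras --- for instance the tree class of the stable Auslander--Reiten quiver (type $A$ for Brauer tree algebras versus $D_{3m}$ for the nonstandard ones), which is a stable invariant. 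Note that if the nonstandard algebras could not be excluded, the proposition itself would be in doubt, so this is not a cosmetic omission.

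The paper avoids the issue by citing the sharper theorem of Gabriel--Riedtmann directly: an algebra \emph{stably equivalent} to a Brauer tree algebra is again a Brauer tree algebra with the same numerical invariants. That statement packages the representation-finiteness, the exclusion of the characteristic-$2$ exceptions, and the matching of $(e,m)$ into a single citation, which is why the paper's proof is two lines. If you repair your argument by adding the stable-AR-quiver (or an equivalent) invariant to rule out the nonstandard algebras, you essentially reconstruct the Gabriel--Riedtmann result, at which point your Cartan-determinant computation becomes a pleasant but optional way to pin down the multiplicity.
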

\begin{proof}
Note that $\Lambda$ and $\Gamma$ are stably equivalent to each other (see \cite[Theorem 4.2]{Ri2}, \cite{Ri3}). Then we know from \cite{GR} that $\Gamma$ is given by some Brauer tree which has the same numerical invariants as $\Lambda$. 
\end{proof}

In this section, we will apply Theorem \ref{tilting_complex} to Brauer tree algebras and determine the transformations of Brauer tree algebras induced by reflections.

\begin{remark}\label{nakayama_functor}
Let $\Lambda$ be a  Brauer tree algebra. Then for any simple module $S \in \mathrm{mod}\text{-}\Lambda$ we have $E(S) \cong P(S)$. 
\end{remark}

Throughout the rest of this section, we deal only with Brauer trees without exceptional vertex. Let $\Lambda$ be a Brauer tree algebra, $(\Lambda_{0}, \Lambda_{1}, \rho)$ the quiver with relations of $\Lambda$ and $t \in \Lambda_{0}$. We denote by the diagram 
\[
\begin{xy}
(15,-15) *{t}="t",
(10,-25) *{a_{1}}="a(1)",
(0,-30) *{a_{2}}="a(2)",
(-10,-25) *{}="a(3)",
(-15,-15) *{}="a(i)",
(-10,-5) *{}="a(p-2)", 
(0,0) *{a_{p-1}}="a(p-1)",
(10,-5) *{a_{p}}="a(p)", 
\ar "a(1)" ; "t"  
\ar "a(2)" ; "a(1)" 
\ar "a(3)" ; "a(2)" 
\ar @{.} "a(i)" ; "a(3)"  
\ar @{.} "a(p-2)" ; "a(i)" 
\ar "a(p-1)" ; "a(p-2)"  
\ar "a(p)" ; "a(p-1)" 
\ar_{f_{t}} "t" ; "a(p)" 
\end{xy}
\]
with $p \ge 1$ the situation that $t$ belongs to at most one cycle, and by the diagram 
\[
\begin{xy}
(15,-15) *{t}="t",
(10,-25) *{a_{1}}="a(1)",
(0,-30) *{a_{2}}="a(2)",
(-10,-25) *{}="a(3)",
(-15,-15) *{}="a(i)",
(-10,-5) *{}="a(p-2)", 
(0,0) *{a_{p-1}}="a(p-1)",
(10,-5) *{a_{p}}="a(p)", 
(20,-5) *{b_{1}}="b(1)",
(30,0) *{b_{2}}="b(2)",
(40,-5) *{}="b(3)",
(45,-15) *{}="b(i)",
(40,-25) *{}="b(r-2)",
(30,-30) *{b_{r-1}}="b(r-1)",
(20,-25) *{b_{r}}="b(r)",

\ar "a(1)" ; "t"  
\ar "a(2)" ; "a(1)" 
\ar "a(3)" ; "a(2)" 
\ar @{.} "a(i)" ; "a(3)"  
\ar @{.} "a(p-2)" ; "a(i)" 
\ar "a(p-1)" ; "a(p-2)"  
\ar "a(p)" ; "a(p-1)" 
\ar^{f_{t,a}} "t" ; "a(p)" 
\ar "b(1)" ; "t" 
\ar "b(2)" ; "b(1)" 
\ar "b(3)" ; "b(2)" 
\ar @{.} "b(i)" ; "b(3)" 
\ar @{.} "b(r-2)" ; "b(i)" 
\ar "b(r-1)" ; "b(r-2)" 
\ar "b(r)" ; "b(r-1)" 
\ar^{f_{t,b}} "t" ; "b(r)" 

\end{xy}
\]
with $p, r \ge 1$ the situation that $t$ belongs to two cycles. We denote by $S_{t}$ the simple module corresponding to $t$ and by $P_{t}$ the projective cover of $S_{t}$. 

\begin{lemma}\label{self_ext}
The following hold. 
\begin{enumerate}
\item[(1)] If $t$ belongs to at most one cycle, we have a minimal injective presentation $0 \to S_{t} \to P_{t} \overset{f}{\to} P_{a_{p}}$ with $f=f_{t}$. 
\item[(2)] If $t$ belongs to two cycle, we have a minimal injective presentation 
\[
0 \to S_{t} \to P_{t} \overset{f}{\to} P_{a_{p}} \oplus P_{b_{r}} \ \text{with} \ f = \left(\begin{array}{c}
f_{t,a} \\
f_{t,b}
\end{array}\right).
\]
\item[(3)] For any $t \in \Lambda_{0}$, we have $\mathrm{Ext}^{1}_{\Lambda}(S_{t}, S_{t})=0$. 
\end{enumerate}
\end{lemma}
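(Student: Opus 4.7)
The plan is to read off the minimal injective copresentation of $S_t$ directly from the Loewy structure of $P_t$, which is well known for Brauer tree algebras (see \cite{GR}). By Remark \ref{nakayama_functor} we have $E(S_t) \cong P(S_t) = P_t$, so $E^0 = P_t$ and the embedding $S_t \hookrightarrow E^0$ identifies $S_t$ with $\mathrm{soc}(P_t)$. Parts (1) and (2) then reduce to computing $\mathrm{soc}(P_t/\mathrm{soc}(P_t))$ and matching the induced map $P_t \to E^1$ with the relevant arrow(s).

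First I would recall the Loewy structure of $P_t$. Using the conventions set before Example \ref{example_tilting}, a path $i_1 \to \cdots \to i_n$ in the quiver gives a composition $P_{i_1} \to \cdots \to P_{i_n}$ of nonzero maps lying in the radical, so the composition factors of $P_t$ correspond layer-by-layer to paths ending at $t$. In case (1) the only arrow ending at $t$ is $a_1 \to t$, and iterating around the cycle yields a uniserial $P_t$ with composition series (top to bottom) $S_t, S_{a_1}, S_{a_2}, \ldots, S_{a_p}, S_t$, the socle $S_t$ arising from the full closing path $t \to a_p \to \cdots \to a_1 \to t$. In case (2) the two arrows $a_1 \to t$ and $b_1 \to t$ give a biserial $P_t$: $\mathrm{rad}(P_t)/\mathrm{soc}(P_t)$ decomposes as a direct sum of two uniserial modules with composition series $S_{a_1}, \ldots, S_{a_p}$ and $S_{b_1}, \ldots, S_{b_r}$ respectively, the two arms joined on top by $S_t$ and in the socle by a single $S_t$, the two closing cycle paths being identified up to a unit by the Brauer-tree relations.

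Parts (1) and (2) now follow by a socle computation: $\mathrm{soc}(P_t/\mathrm{soc}(P_t))$ equals $S_{a_p}$ in case (1) and $S_{a_p} \oplus S_{b_r}$ in case (2), so $E^1$ is $P_{a_p}$, respectively $P_{a_p} \oplus P_{b_r}$. The induced map $P_t \to E^1$ is nonzero with image isomorphic to $P_t/\mathrm{soc}(P_t)$, and since the arrow $f_t \in \mathrm{Hom}_{\Lambda}(P_t, P_{a_p})$ (respectively each of $f_{t,a}, f_{t,b}$) is a generator of the corresponding Hom space modulo the squared radical, after absorbing a scalar the map must agree with $f_t$, respectively with the column $(f_{t,a}, f_{t,b})^{\top}$.

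For (3), the second Loewy layer $\mathrm{rad}(P_t)/\mathrm{rad}^2(P_t)$ of $P_t$ equals $S_{a_1}$ in case (1) and $S_{a_1} \oplus S_{b_1}$ in case (2). In a Brauer tree without exceptional vertex no edge is adjacent to itself in any cyclic order at a vertex, so $a_1 \ne t$ and, when present, $b_1 \ne t$; hence $S_t$ does not appear in the second layer and standard theory gives $\mathrm{Ext}^{1}_{\Lambda}(S_t, S_t) = 0$. The main obstacle is the clean import of the Loewy structure of $P_t$, in particular the biserial collapse at the socle in case (2), from the quiver-and-relations presentation of \cite{GR}; once that is in place, what remains is routine socle and basis bookkeeping.
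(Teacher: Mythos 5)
Your proposal is correct and follows essentially the same route as the paper: identify $S_t$ with $\mathrm{soc}(P_t)$ via $E(S_t)\cong P_t$, read off the (uni)serial resp.\ biserial Loewy structure of $P_t$, and compute $\mathrm{soc}(P_t/\mathrm{soc}(P_t))$ to get $E^1$. The only (harmless) divergence is in (3), where the paper deduces $\mathrm{Ext}^1_\Lambda(S_t,S_t)=0$ directly from the minimal injective copresentation of (1) and (2) (since $S_t$ does not occur in $\mathrm{soc}(E^1)$), while you argue dually via the absence of $S_t$ in $\mathrm{rad}(P_t)/\mathrm{rad}^2(P_t)$; both are immediate from the same Loewy data.
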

\begin{proof}
(1) Since $P_{t}$ is uniserial and has the unique composition series
\[
S_{t} \quad S_{a_{1}} \quad S_{a_{2}} \quad \cdots \quad S_{a_{p}} \quad S_{t},
\]
where the left end is the top of $P_{t}$ and the right end is the socle of $P_{t}$. So we have $\mathrm{soc}(P_{t}/S_{t}) = S_{a_{p}}$. Thus $E(P_{t}/S_{t}) \cong P_{a_{p}}$ and the assertion follows. \\
\indent
(2) Since $\mathrm{rad}(P_{t}) / \mathrm{soc}(P_{t})$ is the direct sum of two uniserial modules, $P_{t}$ has the composition series
\[
\begin{array}{cccccc}
 & S_{a_{1}} & S_{a_{2}} & \cdots & S_{a_{p}} & \\
S_{t} & & & & & S_{t},  \\
 & S_{b_{1}} & S_{b_{2}} & \cdots & S_{b_{r}} & 
\end{array}
\]
where the left end is the top of $P_{t}$ and the right end is the socle of $P_{t}$. Thus $E(P_{t}/S_{t}) \cong P_{a_{p}} \oplus P_{b_{r}}$ and the assertion follows. \\
\indent
(3) This follows by (1) and (2).   
\end{proof}

Take a minimal injective presentation $0 \to S_{t} \to E^{0}_{t} \overset{f}{\to} E^{1}_{t}$ and define a complex $E^{\bullet}_{t}$ as the mapping cone of $f:E^{0}_{t} \to E^{1}_{t}$. Set 
\[
T^{\bullet}_{t} = T_{1} \oplus E^{\bullet}_{t} \quad \text{with} \quad T_{1} = \bigoplus_{i \in \Lambda_{0} \setminus \{ t \} } P_{i}. 
\]
Then by Theorem \ref{tilting_complex}, Remark \ref{nakayama_functor} and Lemma \ref{self_ext}(3) $T^{\bullet}_{t}$ is a tilting complex and $\mathrm{End}_{\mathscr{K}(\mathrm{mod}\text{-}\Lambda)}(T^{\bullet}_{t})$ is the reflection of $\Lambda$ at $t$. Set $\Gamma = \mathrm{End}_{\mathscr{K}(\mathrm{mod}\text{-}\Lambda)}(T^{\bullet}_{t})$ and let $(\Gamma_{0}, \Gamma_{1}, \sigma)$ be the quiver with relations of $\Gamma$. According to Proposition \ref{der_eq_Brauer_tree}, we have the following. 

\begin{remark}\label{proj_Brauer}
For $x, y \in \Gamma_{0}$, $\mathrm{Hom}_{\Gamma}(P_{x}, P_{y}) \ne 0$ if and only if $x, y$ belong to the same cycle in $\Gamma$. If this is the case, $\mathrm{dim}_{K} \mathrm{Hom}_{\Gamma}(P_{x}, P_{y}) = 1$ for $x \ne y$ and $\mathrm{dim}_{K} \mathrm{End}_{\Gamma}(P_{x}) = 2$. 
\end{remark}

Note that $\Gamma_{0} = (\Lambda_{0} \setminus \{ t \} ) \cup \{t^{\prime} \}$, where $t^{\prime}$ is the vertex corresponding to $E^{\bullet}_{t}$. Since $\Gamma$ is a Brauer tree algebra, the relations $\sigma$ is determined automatically by $\Gamma_{0}$ and $\Gamma_{1}$. To determine $\Gamma_{1}$, by Remark \ref{proj_Brauer} it suffices to consider the following cycles in $(\Lambda_{0}, \Lambda_{1}, \rho)$. We denote by the diagram 
\[
\begin{xy}
(15,-15) *{t}="t",
(10,-25) *{a_{1}}="a(1)",
(0,-30) *{a_{2}}="a(2)",
(-10,-25) *{}="a(3)",
(-15,-15) *{}="a(i)",
(-10,-5) *{}="a(p-2)", 
(0,0) *{a_{p-1}}="a(p-1)",
(10,-5) *{a_{p}}="a(p)", 
(18,5) *{a_{p,q}}="a(pq)",
(18,12) *{}="a(pq-1)",
(5,12) *{}="a(p3)",
(5,5) *{a_{p,2}}="a(p2)", 
\ar_{\alpha_{1}} "a(1)" ; "t"  
\ar_{\alpha_{2}} "a(2)" ; "a(1)" 
\ar "a(3)" ; "a(2)" 
\ar @{.} "a(i)" ; "a(3)"  
\ar @{.} "a(p-2)" ; "a(i)" 
\ar_{\alpha_{p-1}} "a(p-1)" ; "a(p-2)"  
\ar^{\alpha_{p}} "a(p)" ; "a(p-1)" 
\ar_{f_{t}} "t" ; "a(p)" 
\ar "a(p2)" ; "a(p)" 
\ar "a(p3)" ; "a(p2)" 
\ar @{.} @(u,u) "a(pq-1)" ; "a(p3)" 
\ar "a(pq)" ; "a(pq-1)" 
\ar_{\phi} "a(p)" ; "a(pq)" 
\end{xy}
\]
with $p, q \ge 1$ the situation that $t$ belongs to at most one cycle and set $a_{p,1}=a_{p}$. Also, we denote by the diagram 
\[
\begin{xy}
(15,-15) *{t}="t",
(10,-25) *{a_{1}}="a(1)",
(0,-30) *{a_{2}}="a(2)",
(-10,-25) *{}="a(3)",
(-15,-15) *{}="a(i)",
(-10,-5) *{}="a(p-2)", 
(0,0) *{a_{p-1}}="a(p-1)",
(10,-5) *{a_{p}}="a(p)", 
(18,5) *{a_{p,q}}="a(pq)",
(18,12) *{}="a(pq-1)",
(5,12) *{}="a(p3)",
(5,5) *{a_{p,2}}="a(p2)", 
(20,-5) *{b_{1}}="b(1)",
(30,0) *{b_{2}}="b(2)",
(40,-5) *{}="b(3)",
(45,-15) *{}="b(i)",
(40,-25) *{}="b(r-2)",
(30,-30) *{b_{r-1}}="b(r-1)",
(20,-25) *{b_{r}}="b(r)",
(12,-35) *{b_{r,s}}="b(rs)",
(12,-42) *{}="b(rs-1)",
(25,-42) *{}="b(r3)",
(25,-35) *{b_{r,2}}="b(r2)",

\ar "a(1)" ; "t"  
\ar "a(2)" ; "a(1)" 
\ar "a(3)" ; "a(2)" 
\ar @{.} "a(i)" ; "a(3)"  
\ar @{.} "a(p-2)" ; "a(i)" 
\ar "a(p-1)" ; "a(p-2)"  
\ar "a(p)" ; "a(p-1)" 
\ar "t" ; "a(p)" 
\ar "a(p2)" ; "a(p)" 
\ar "a(p3)" ; "a(p2)" 
\ar @{.} @(u,u) "a(pq-1)" ; "a(p3)" 
\ar "a(pq)" ; "a(pq-1)" 
\ar "a(p)" ; "a(pq)" 
\ar "b(1)" ; "t" 
\ar "b(2)" ; "b(1)" 
\ar "b(3)" ; "b(2)" 
\ar @{.} "b(i)" ; "b(3)" 
\ar @{.} "b(r-2)" ; "b(i)" 
\ar "b(r-1)" ; "b(r-2)" 
\ar "b(r)" ; "b(r-1)" 
\ar "t" ; "b(r)" 
\ar "b(r2)" ; "b(r)" 
\ar "b(r3)" ; "b(r2)" 
\ar @{.} @(d,d) "b(rs-1)" ; "b(r3)" 
\ar "b(rs)" ; "b(rs-1)" 
\ar "b(r)" ; "b(rs)" 

\end{xy}
\]
with $p, q, r, s \ge 1$ the situation that $t$ belongs to two cycles and set $a_{p,1}=a_{p}$ and $b_{r,1}=b_{r}$. 

\begin{lemma}\label{a_cycle}
If $t$ belongs to at most one cycle, then the following hold. 
\begin{enumerate}
\item[(1)] There exists $\zeta_{a_{p}} \in \mathrm{Hom}_{\mathscr{K}(\mathrm{mod}\text{-}\Lambda)}(P_{a_{p}}, E^{\bullet}_{t})$ with $\zeta_{a_{p}} \in \mathrm{rad}(\Gamma) \setminus \mathrm{rad}^{2}(\Gamma)$. 
\item[(2)] There exists $\eta_{a_{p,q}} \in \mathrm{Hom}_{\mathscr{K}(\mathrm{mod}\text{-}\Lambda)}(E^{\bullet}_{t}, P_{a_{p,q}})$ with $\eta_{a_{p,q}} \in \mathrm{rad}(\Gamma) \setminus \mathrm{rad}^{2}(\Gamma)$. 
\item[(3)] There exists $\theta_{a_{p}} \in \mathrm{Hom}_{\mathscr{K}(\mathrm{mod}\text{-}\Lambda)}(P_{a_{1}}, P_{a_{p}})$ with $\theta_{a_{p}} \in \mathrm{rad}(\Gamma) \setminus \mathrm{rad}^{2}(\Gamma)$. 
\end{enumerate}
\end{lemma}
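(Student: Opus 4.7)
I would exhibit each morphism at the chain level, verify the chain-map conditions (invoking the Brauer tree relations where needed), check non-null-homotopy, and then rule out $\mathrm{rad}^2(\Gamma)$-membership by analysing every factorization through an indecomposable summand of $T^{\bullet}_t$.

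For (1) take $\zeta_{a_p}$ to be the chain map $P_{a_p}\to E^{\bullet}_t$ with $\zeta^0_{a_p}=\mathrm{id}_{P_{a_p}}$; a null-homotopy would yield $h\colon P_{a_p}\to P_t$ with $f_t\circ h=\mathrm{id}$, contradicting $\ker f_t=S_t$. For (2) take $\eta_{a_{p,q}}$ to have degree-$0$ component the morphism $\phi\colon P_{a_p}\to P_{a_{p,q}}$ corresponding to the indicated path in the second cycle; the chain-map condition $\phi\circ f_t=0$ is among the Brauer tree relations (concatenations of arrows from two distinct cycles at a common vertex lie in $\rho$), and $\eta_{a_{p,q}}\ne 0$ in $\mathscr{K}(\mathrm{mod}\text{-}\Lambda)$ because the target is a stalk complex in degree $0$ and so admits no nontrivial null-homotopies. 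For (3) take $\theta_{a_p}=f_t\circ\alpha_1$, a nonzero length-two path in $\Lambda$. Each morphism is a non-isomorphism between distinct summands of $T^{\bullet}_t$ and so lies in $\mathrm{rad}(\Gamma)$.

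The core step is excluding $\mathrm{rad}^2(\Gamma)$. Any candidate factorization is a sum of $\psi\circ\varphi$ with $\psi,\varphi\in\mathrm{rad}(\Gamma)$ passing through a summand $M\in\{E^{\bullet}_t\}\cup\{P_j:j\ne t\}$. For $M=P_j$, the degree-$0$ component of $\psi\circ\varphi$ lies in $\mathrm{rad}^2(\Lambda)$, whereas representatives of $\zeta_{a_p}$ and $\eta_{a_{p,q}}$ reside outside $\mathrm{rad}(\Lambda)$ and $\mathrm{rad}^2(\Lambda)$ respectively, and the available homotopy correction is itself in the ambient radical and cannot compensate. For $\theta_{a_p}$ one checks more delicately that $\psi^0\circ\varphi^0$ vanishes in $\mathrm{Hom}_{\Lambda}(P_{a_1},P_{a_p})=K\theta_{a_p}$: the only non-wrapping length-two path from $a_1$ to $a_p$ in the cycle through $t$ passes through the excluded vertex $t$, while every other factorization either wraps around a cycle (producing $s_x$ followed by an arrow, which vanishes by $\alpha\circ s_x=s_x\circ\alpha=0$) or occurs in cycles not common to both $a_1$ and $a_p$, which forces a factor to be zero since the Brauer tree is a tree. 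For $M=E^{\bullet}_t$, case (3) is eliminated at once by
\[
\mathrm{Hom}_{\mathscr{K}(\mathrm{mod}\text{-}\Lambda)}(P_{a_1},E^{\bullet}_t)=\mathrm{Hom}_{\Lambda}(P_{a_1},P_{a_p})/f_t\circ\mathrm{Hom}_{\Lambda}(P_{a_1},P_t)=0,
\]
both spaces being spanned by $f_t\circ\alpha_1$. For (1) and (2), by Remark~\ref{proj_Brauer} $\mathrm{End}_{\mathscr{K}(\mathrm{mod}\text{-}\Lambda)}(E^{\bullet}_t)$ is $2$-dimensional and its radical is represented (up to homotopy) by the chain map with components $(0,s_{a_p})$; composing with $\zeta_{a_p}$ gives degree-$0$ component $s_{a_p}=f_t\circ(\alpha_1\cdots\alpha_p)$, which vanishes in the homotopy quotient, and composing with $\eta_{a_{p,q}}$ gives $\phi\circ s_{a_p}=0$ because $s_{a_p}$ factors through $S_{a_p}\not\cong S_{a_{p,q}}$.

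\textbf{Anticipated obstacle.} The main bookkeeping lies in the case $M=E^{\bullet}_t$ for (1) and (2): one must pin down a chain-level representative of the generator of $\mathrm{rad}\,\mathrm{End}_{\mathscr{K}(\mathrm{mod}\text{-}\Lambda)}(E^{\bullet}_t)$, after which the vanishings follow from the socle identity $f_t\circ(\alpha_1\cdots\alpha_p)=s_{a_p}$ and the orthogonality of distinct simple modules.
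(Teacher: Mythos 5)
Your overall strategy --- exhibit explicit chain maps, use the shape of $E^{\bullet}_{t}$ (a two-term complex $P_{t}\to P_{a_{p}}$ concentrated in degrees $-1,0$) to control homotopies, and then rule out $\mathrm{rad}^{2}(\Gamma)$ by inspecting all factorizations through summands of $T^{\bullet}_{t}$ --- is exactly the paper's, and in places (the factorization through $E^{\bullet}_{t}$ itself, the identification of the generator of $\mathrm{rad}\,\mathrm{End}_{\mathscr{K}(\mathrm{mod}\text{-}\Lambda)}(E^{\bullet}_{t})$ as $(0,s_{a_{p}})$) you supply more detail than the paper does. Parts (1) and (3) are fine.

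There is, however, a genuine gap in part (2): you only treat the case $q\ne 1$. The lemma's indexing convention sets $a_{p,1}=a_{p}$, so $q=1$ is precisely the case where $a_{p}$ belongs to at most one cycle (it is an end edge of the Brauer tree), which is needed for cases (1) and (3) of Theorem~\ref{reflection_quiver}. In that case there is no ``second cycle'' at $a_{p}$ and no arrow $\phi$; the only candidate for the degree-$0$ component of $\eta_{a_{p}}\colon E^{\bullet}_{t}\to P_{a_{p}}$ is the socle endomorphism $s_{a_{p}}=f_{t}\alpha_{1}\cdots\alpha_{p}$ (this is forced, since $g f_{t}=0$ kills the identity component of $\mathrm{End}_{\Lambda}(P_{a_{p}})$). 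This map lies in $\mathrm{rad}^{p+1}(\Lambda)\subseteq\mathrm{rad}^{2}(\Lambda)$, so your criterion ``the representative of $\eta$ resides outside $\mathrm{rad}^{2}(\Lambda)$, whereas any factorization through a $P_{j}$ has degree-$0$ component in $\mathrm{rad}^{2}(\Lambda)$'' proves nothing here. The paper closes this case by a different argument: it shows $\mathrm{Hom}_{\Lambda}(\mathrm{Cok}\,f_{t},P_{a_{i}})=0$, hence $\mathrm{Hom}_{\mathscr{K}(\mathrm{mod}\text{-}\Lambda)}(E^{\bullet}_{t},P_{a_{i}})=0$ for all $i\ne p$, so the first factor of any would-be factorization of $\eta_{a_{p}}$ through a projective vertex of the cycle vanishes (and factorizations through $P_{a_{p}}$ or $E^{\bullet}_{t}$ die on $s_{a_{p}}^{2}=0$, as in your last paragraph). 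You should add this case, with this or an equivalent argument, for the lemma to cover the situations in which it is actually applied.
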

\begin{proof}
By Lemma \ref{self_ext}(1), the complex $E^{\bullet}_{t}$ is of the form  
\[
\xymatrix{
E^{\bullet}_{t} : \cdots \ar [r] & 0 \ar [r] & P_{t} \ar [r] & P_{a_{p}} \ar [r] & 0 \ar [r] & \cdots,
}
\]
where $P_{a_{p}}$ is the $0$th term. \\
\indent
(1) We have a cochain map 
\[
\xymatrix{
P_{a_{p}} \ar [d]<-10pt>_{\zeta_{a_{p}}} : \cdots \ar [r] & 0 \ar [r] \ar [d] & 0 \ar [r] \ar [d] & P_{a_{p}} \ar [r] \ar [d]^{\mathrm{id}} & 0 \ar [r] \ar [d] & \cdots \\
E^{\bullet}_{t} : \cdots \ar [r] & 0 \ar [r] & P_{t} \ar [r]_{f_{t}} & P_{a_{p}} \ar [r] & 0 \ar [r] & \cdots ,
}
\]
which is obviously not homotopic to zero. Since $P_{a_{p}}$ is indecomposable and injective, $\zeta_{a_{p}} \in \mathrm{rad}(\Gamma) \setminus \mathrm{rad}^{2}(\Gamma)$. \\
\indent
(2) Consider first the case where $q \ne 1$. Since $\phi f_{t} = 0$, we have a cochain map 
\[
\xymatrix{
E^{\bullet}_{t} \ar [d]<-10pt>_{\eta_{a_{p,q}}} : \cdots \ar [r] & 0 \ar [r] \ar [d] & P_{t} \ar [r]^{f_{t}} \ar [d] & P_{a_{p}} \ar [r] \ar [d]^{\phi} & 0 \ar [r] \ar [d] & \cdots \\
P_{a_{p,q}} : \cdots \ar [r] & 0 \ar [r] & 0 \ar [r] & P_{a_{p,q}} \ar [r] & 0 \ar [r] & \cdots 
}
\]
which is obviously not homotopic to zero. Since $\phi \in \mathrm{rad}(\Lambda) \setminus \mathrm{rad}^{2}(\Lambda)$, we have $\eta_{a_{p,q}} \in \mathrm{rad}(\Gamma) \setminus \mathrm{rad}^{2}(\Gamma)$. Next, assume that $q = 1$. We have a cochain map 
\[
\xymatrix{
E^{\bullet}_{t} \ar [d]<-10pt>_{\eta_{a_{p}}} : \cdots \ar [r] & 0 \ar [r] \ar [d] & P_{t} \ar [r]^{f_{t}} \ar [d] & P_{a_{p}} \ar [r] \ar [d]^{f_{t} \alpha_{1} \cdots \alpha_{p}} & 0 \ar [r] \ar [d] & \cdots \\
P_{a_{p}} : \cdots \ar [r] & 0 \ar [r] & 0 \ar [r] & P_{a_{p}} \ar [r] & 0 \ar [r] & \cdots 
}
\]
which is obviously not homotopic to zero. For any $i \ne p$, $\mathrm{Hom}_{\Lambda}(\mathrm{Cok}\ f_{t}, P_{a_{i}})=0$ and hence $\mathrm{Hom}_{\mathscr{K}(\mathrm{mod}\text{-}\Lambda)}(E^{\bullet}_{t}, P_{a_{i}})=0$. It follows that $\eta_{a_{p}} \in \mathrm{rad}(\Gamma) \setminus \mathrm{rad}^{2}(\Gamma)$. \\
\indent
(3) We have $0 \ne f \alpha_{1} \in \mathrm{Hom}_{\Lambda}(P_{a_{1}},P_{a_{p}})$, which yields a nonzero map $\theta_{a_{p}} \in \mathrm{Hom}_{\mathscr{K}(\mathrm{mod}\text{-}\Lambda)}(P_{a_{1}}, P_{a_{p}})$. For any $i \ne 1$, since $f \alpha_{1}$ does not factor through $P_{a_{i}}$, $\theta_{a_{p}}$ does not factor through $P_{a_{i}}$. Also, for any $i \ne p$, $\mathrm{Hom}_{\Lambda}(P_{a_{i}}, \mathrm{Cok}\ f_{t})=0$ and $\mathrm{Hom}_{\mathscr{K}(\mathrm{mod}\text{-}\Lambda)}(P_{a_{i}},E^{\bullet}_{t})=0$. It follows that $\theta_{a_{p}} \in \mathrm{rad}(\Gamma) \setminus \mathrm{rad}^{2}(\Gamma)$. 
\end{proof}

\begin{lemma}\label{two_cycles}
If $t$ belongs to two cycles, then the following hold. 
\begin{enumerate}
\item[(1)] There exist $\zeta_{a_{p}} \in \mathrm{Hom}_{\mathscr{K}(\mathrm{mod}\text{-}\Lambda)}(P_{a_{p}}, E^{\bullet}_{t})$ with $\zeta_{a_{p}} \in \mathrm{rad}(\Gamma) \setminus \mathrm{rad}^{2}(\Gamma)$ and $\zeta_{b_{r}} \in \mathrm{Hom}_{\mathscr{K}(\mathrm{mod}\text{-}\Lambda)}(P_{b_{r}}, E^{\bullet}_{t})$ with $\zeta_{b_{r}} \in \mathrm{rad}(\Gamma) \setminus \mathrm{rad}^{2}(\Gamma)$. 
\item[(2)] There exist $\eta_{a_{p,q}} \in \mathrm{Hom}_{\mathscr{K}(\mathrm{mod}\text{-}\Lambda)}(E^{\bullet}_{t}, P_{a_{p,q}})$ with $\eta_{a_{p,q}} \in \mathrm{rad}(\Gamma) \setminus \mathrm{rad}^{2}(\Gamma)$ and $\eta_{b_{r,s}} \in \mathrm{Hom}_{\mathscr{K}(\mathrm{mod}\text{-}\Lambda)}(E^{\bullet}_{t}, P_{b_{r,s}})$ with $\eta_{b_{r,s}} \in \mathrm{rad}(\Gamma) \setminus \mathrm{rad}^{2}(\Gamma)$. 
\item[(3)] There exist $\theta_{a_{p}} \in \mathrm{Hom}_{\mathscr{K}(\mathrm{mod}\text{-}\Lambda)}(P_{a_{1}}, P_{a_{p}})$ with $\theta_{a_{p}} \in \mathrm{rad}(\Gamma) \setminus \mathrm{rad}^{2}(\Gamma)$ and $\theta_{b_{r}} \in \mathrm{Hom}_{\mathscr{K}(\mathrm{mod}\text{-}\Lambda)}(P_{b_{1}}, P_{b_{r}})$ with $\theta_{b_{r}} \in \mathrm{rad}(\Gamma) \setminus \mathrm{rad}^{2}(\Gamma)$. 
\end{enumerate}
\end{lemma}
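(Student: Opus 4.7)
The plan is to mimic the proof of Lemma \ref{a_cycle} almost verbatim, the only structural change being that by Lemma \ref{self_ext}(2) the complex $E^{\bullet}_t$ now has the form
\[
\cdots \to 0 \to P_t \overset{f}{\to} P_{a_p} \oplus P_{b_r} \to 0 \to \cdots
\]
with $f = \binom{f_{t,a}}{f_{t,b}}$ and $P_{a_p} \oplus P_{b_r}$ placed in degree $0$. Accordingly, each construction in the one-cycle proof is replicated by picking out the correct summand of $E^{1}_{t}$.

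For (1), I would take $\zeta_{a_p}$ to be the cochain map whose only nonzero component is the canonical inclusion $P_{a_p} \hookrightarrow P_{a_p} \oplus P_{b_r}$ in degree $0$, and $\zeta_{b_r}$ the analogous inclusion into the second summand. These are evidently not null-homotopic, and since $P_{a_p}$ and $P_{b_r}$ are indecomposable and injective the same argument as in Lemma \ref{a_cycle}(1) places them in $\mathrm{rad}(\Gamma) \setminus \mathrm{rad}^2(\Gamma)$.

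For (2) with $q \ne 1$ I would define $\eta_{a_{p,q}}$ to have degree-$0$ component $(\phi,0): P_{a_p} \oplus P_{b_r} \to P_{a_{p,q}}$; the cochain condition reduces to $\phi f_{t,a} = 0$, which is a relation in $\Lambda$ since $\phi$ leaves the cycle that is closed by $f_{t,a}$. For $q = 1$ I would take instead the degree-$0$ component $(f_{t,a}\alpha_1\cdots\alpha_p, 0)$. Non-null-homotopy and the $\mathrm{rad} \setminus \mathrm{rad}^2$ conclusion then follow by the same orthogonality argument as in Lemma \ref{a_cycle}(2); the only new input is that $\mathrm{Hom}_{\Lambda}(P_{b_r}, P_{a_{p,q}}) = 0$ because $b_r$ and $a_{p,q}$ lie in distinct cycles (Remark \ref{proj_Brauer}), which is precisely what allows the second component of the map to be zero without changing the argument. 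The constructions of $\eta_{b_{r,s}}$ are obtained by interchanging the roles of $a$ and $b$. Part (3) only concerns homomorphisms between the $P_{a_i}$ and between the $P_{b_j}$ with $i, j \ne t$, so $E^{\bullet}_t$ does not appear at all, and I would simply apply the argument of Lemma \ref{a_cycle}(3) separately to each of the two cycles.

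The main point of friction, and the only step where I would need to do a short independent check, is the orthogonality input $\mathrm{Hom}_{\Lambda}(\mathrm{Cok}\,f, P_{a_i}) = 0$ (and its dual) used to rule out unwanted factorisations in (2) and (3). In the two-cycle case $\mathrm{Cok}\,f$ is the quotient of $P_{a_p} \oplus P_{b_r}$ by the diagonal image of $P_t/S_t$ rather than by a single summand, so I would have to inspect its composition factors directly from the Brauer tree to confirm that none of the $P_{a_i}$ or $P_{b_j}$ admits a nonzero map to or from it. Once this purely combinatorial check is in hand, the argument of Lemma \ref{a_cycle} transfers without further modification.
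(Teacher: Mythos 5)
Your proposal matches the paper's proof, which simply records the form of $E^{\bullet}_{t}$ from Lemma \ref{self_ext}(2) and states that the assertions follow by the same arguments as in Lemma \ref{a_cycle}; you in fact supply more detail than the paper does, and your flagged check on the composition factors of $\mathrm{Cok}\,f$ (whose factors are $S_{t}, S_{a_{p}}, S_{b_{r}}$ and the $S_{a_{p,j}}, S_{b_{r,j}}$ with $j\ge 2$) does go through.
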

\begin{proof}
By Lemma \ref{self_ext}(2), the complex $E^{\bullet}_{t}$ is of the form  
\[
\xymatrix{
E^{\bullet}_{t} : \cdots \ar [r] & 0 \ar [r] & P_{t} \ar [r] & P_{a_{p}} \oplus P_{b_{r}} \ar [r] & 0 \ar [r] & \cdots, 
}
\]
where $P_{a_{p}} \oplus P_{b_{r}}$ is the $0$th term. The assertions follow by the same arguments as in the proof of Lemma \ref{a_cycle}. 
\end{proof}

According to Lemmas \ref{a_cycle} and \ref{two_cycles}, we have the following new arrows in $\Gamma_{1}$. We denote by $\xybox{\ar @{=>} (10,0)}$ the arrows defined by $\zeta_{*}$, by $\xybox{\ar @{~>} (10,0)}$ the arrows defined by $\eta_{*}$ and by $\xybox{\ar @{-->} (10,0)}$ the arrows defined by $\theta_{*}$. In the next theorem, the left hand side diagrams denote cycles in $(\Lambda_{0}, \Lambda_{1}, \rho)$ and the right hand side diagrams denote cycles in $(\Gamma_{0}, \Gamma_{1}, \sigma)$. 

\begin{theorem}\label{reflection_quiver}
The following hold. 
\begin{enumerate}
\item[(1)] If $t$ and $a_{p}$ belong to at most one cycle, then the reflection for $\Lambda$ at $t$ gives rise to the following transformation:
\[
\begin{array}{ccc}
\begin{xy}
(15,-15) *{t}="t",
(10,-25) *{a_{1}}="a(1)",
(0,-30) *{a_{2}}="a(2)",
(-10,-25) *{}="a(3)",
(-15,-15) *{}="a(i)",
(-10,-5) *{}="a(p-2)", 
(0,0) *{a_{p-1}}="a(p-1)",
(10,-5) *{a_{p}}="a(p)", 
\ar "a(1)" ; "t" 
\ar "a(2)" ; "a(1)" 
\ar "a(3)" ; "a(2)" 
\ar @{.} "a(i)" ; "a(3)" 
\ar @{.} "a(p-2)" ; "a(i)" 
\ar "a(p-1)" ; "a(p-2)" 
\ar "a(p)" ; "a(p-1)" 
\ar "t" ; "a(p)" 
\end{xy}
&
\begin{xy}
(-2,-15) *{}="a",
(2,-15) *{}="b",
\SelectTips{eu}{12} \ar @{->} "a" ; "b"
\end{xy}
&
\begin{xy}
(15,-15) *{t^{\prime}}="t",
(10,-25) *{a_{1}}="a(1)",
(0,-30) *{a_{2}}="a(2)",
(-10,-25) *{}="a(3)",
(-15,-15) *{}="a(i)",
(-10,-5) *{}="a(p-2)", 
(0,0) *{a_{p-1}}="a(p-1)",
(10,-5) *{a_{p}}="a(p)", 
\ar @{=>} @<-1.5pt> "a(p)" ; "t" 
\ar @{~>} @<-3.5pt> "t" ; "a(p)" 
\ar @{-->} @<3pt> "a(1)" ; "a(p)" 
\ar "a(2)" ; "a(1)" 
\ar "a(3)" ; "a(2)" 
\ar @{.} "a(i)" ; "a(3)" 
\ar @{.} "a(p-2)" ; "a(i)" 
\ar "a(p-1)" ; "a(p-2)" 
\ar "a(p)" ; "a(p-1)" 
\end{xy}
\end{array}
\]
\item[(2)] If $t$ belongs to at most one cycle and $a_{p}$ belongs to two cycles, then the reflection for $\Lambda$ at $t$ gives rise to the following transformation:
\[
\begin{array}{ccc}
\begin{xy}
(15,-15) *{t}="t",
(10,-25) *{a_{1}}="a(1)",
(0,-30) *{a_{2}}="a(2)",
(-10,-25) *{}="a(3)",
(-15,-15) *{}="a(i)",
(-10,-5) *{}="a(p-2)", 
(0,0) *{a_{p-1}}="a(p-1)",
(10,-5) *{a_{p}}="a(p)", 
(18,5) *{a_{p,q}}="a(pq)",
(18,12) *{}="a(pq-1)",
(5,12) *{}="a(p3)",
(5,5) *{a_{p,2}}="a(p2)", 
\ar "a(1)" ; "t"  
\ar "a(2)" ; "a(1)" 
\ar "a(3)" ; "a(2)" 
\ar @{.} "a(i)" ; "a(3)" 
\ar @{.} "a(p-2)" ; "a(i)" 
\ar "a(p-1)" ; "a(p-2)" 
\ar "a(p)" ; "a(p-1)" 
\ar "t" ; "a(p)" 
\ar "a(p2)" ; "a(p)" 
\ar "a(p3)" ; "a(p2)" 
\ar @{.} @(u,u) "a(pq-1)" ; "a(p3)" 
\ar "a(pq)" ; "a(pq-1)" 
\ar "a(p)" ; "a(pq)" 
\end{xy}
&
\begin{xy}
(-2,-15) *{}="a",
(2,-15) *{}="b",
\SelectTips{eu}{12} \ar @{->} "a" ; "b"
\end{xy}
&
\begin{xy}
(15,-15) *{t^{\prime}}="t",
(10,-25) *{a_{1}}="a(1)",
(0,-30) *{a_{2}}="a(2)",
(-10,-25) *{}="a(3)",
(-15,-15) *{}="a(i)",
(-10,-5) *{}="a(p-2)", 
(0,0) *{a_{p-1}}="a(p-1)",
(10,-5) *{a_{p}}="a(p)", 
(18,5) *{a_{p,q}}="a(pq)",
(18,12) *{}="a(pq-1)",
(5,12) *{}="a(p3)",
(5,5) *{a_{p,2}}="a(p2)", 
\ar @{=>}  "a(p)" ; "t" 
\ar @{~>}  "t" ; "a(pq)" 
\ar @{-->} @<3pt> "a(1)" ; "a(p)" 
\ar "a(2)" ; "a(1)" 
\ar "a(3)" ; "a(2)" 
\ar @{.} "a(i)" ; "a(3)" 
\ar @{.} "a(p-2)" ; "a(i)" 
\ar "a(p-1)" ; "a(p-2)" 
\ar "a(p)" ; "a(p-1)" 
\ar "a(p2)" ; "a(p)" 
\ar "a(p3)" ; "a(p2)" 
\ar @{.} @(u,u) "a(pq-1)" ; "a(p3)" 
\ar "a(pq)" ; "a(pq-1)" 
\end{xy}
\end{array}
\]
\item[(3)] If $t$ belongs to two cycles and $a_{p}$ and $b_{r}$ belong to at most one cycle, then the reflection for $\Lambda$ at $t$ gives rise to the following transformation:
\[
\begin{array}{ccc}
\begin{xy}
(12,-12) *{t}="t",
(8,-20) *{a_{1}}="a(1)",
(0,-24) *{a_{2}}="a(2)",
(-8,-20) *{}="a(3)",
(-12,-12) *{}="a(i)",
(-8,-4) *{}="a(p-2)", 
(0,0) *{a_{p-1}}="a(p-1)",
(8,-4) *{a_{p}}="a(p)", 
(16,-4) *{b_{1}}="b(1)",
(24,0) *{b_{2}}="b(2)",
(32,-4) *{}="b(3)",
(36,-12) *{}="b(i)",
(32,-20) *{}="b(r-2)",
(24,-24) *{b_{r-1}}="b(r-1)",
(16,-20) *{b_{r}}="b(r)",
\ar "a(1)" ; "t"  
\ar "a(2)" ; "a(1)" 
\ar "a(3)" ; "a(2)" 
\ar @{.} "a(i)" ; "a(3)" 
\ar @{.} "a(p-2)" ; "a(i)" 
\ar "a(p-1)" ; "a(p-2)" 
\ar "a(p)" ; "a(p-1)" 
\ar "t" ; "a(p)" 
\ar "b(1)" ; "t" 
\ar "b(2)" ; "b(1)" 
\ar "b(3)" ; "b(2)" 
\ar @{.} "b(i)" ; "b(3)" 
\ar @{.} "b(r-2)" ; "b(i)" 
\ar "b(r-1)" ; "b(r-2)" 
\ar "b(r)" ; "b(r-1)" 
\ar "t" ; "b(r)" 
\end{xy}
&
\begin{xy}
(-2,-12) *{}="a",
(2,-12) *{}="b",
\SelectTips{eu}{12} \ar @{->} "a" ; "b"
\end{xy}
&
\begin{xy}
(12,-12) *{t^{\prime}}="t",
(8,-20) *{a_{1}}="a(1)",
(0,-24) *{a_{2}}="a(2)",
(-8,-20) *{}="a(3)",
(-12,-12) *{}="a(i)",
(-8,-4) *{}="a(p-2)", 
(0,0) *{a_{p-1}}="a(p-1)",
(8,-4) *{a_{p}}="a(p)", 
(16,-4) *{b_{1}}="b(1)",
(24,0) *{b_{2}}="b(2)",
(32,-4) *{}="b(3)",
(36,-12) *{}="b(i)",
(32,-20) *{}="b(r-2)",
(24,-24) *{b_{r-1}}="b(r-1)",
(16,-20) *{b_{r}}="b(r)",
\ar @{=>} @<-1.5pt> "a(p)" ; "t" 
\ar @{~>} @<-3.5pt> "t" ; "a(p)" 
\ar @{-->} @<3pt> "a(1)" ; "a(p)" 
\ar "a(2)" ; "a(1)" 
\ar "a(3)" ; "a(2)" 
\ar @{.} "a(i)" ; "a(3)" 
\ar @{.} "a(p-2)" ; "a(i)" 
\ar "a(p-1)" ; "a(p-2)" 
\ar "a(p)" ; "a(p-1)" 
\ar @{=>} @<-1pt> "b(r)" ; "t" 
\ar @{~>} @<-4pt> "t" ; "b(r)" 
\ar @{-->} @<1pt> "b(1)" ; "b(r)" 
\ar "b(2)" ; "b(1)" 
\ar "b(3)" ; "b(2)" 
\ar @{.} "b(i)" ; "b(3)" 
\ar @{.} "b(r-2)" ; "b(i)" 
\ar "b(r-1)" ; "b(r-2)" 
\ar "b(r)" ; "b(r-1)" 
\end{xy}
\end{array}
\]
\item[(4)] If $t$ and $a_{p}$ belong to two cycles and $b_{r}$ belongs to at most one cycle, then the reflection for $\Lambda$ at $t$ gives rise to the following transformation:
\[
\begin{array}{ccc}
\begin{xy}
(12,-12) *{t}="t",
(8,-20) *{a_{1}}="a(1)",
(0,-24) *{a_{2}}="a(2)",
(-8,-20) *{}="a(3)",
(-12,-12) *{}="a(i)",
(-8,-4) *{}="a(p-2)", 
(0,0) *{a_{p-1}}="a(p-1)",
(8,-4) *{a_{p}}="a(p)", 
(14,5) *{a_{p,q}}="a(pq)",
(14,10) *{}="a(pq-1)",
(4,10) *{}="a(p3)",
(4,5) *{a_{p,2}}="a(p2)", 
(16,-4) *{b_{1}}="b(1)",
(24,0) *{b_{2}}="b(2)",
(32,-4) *{}="b(3)",
(36,-12) *{}="b(i)",
(32,-20) *{}="b(r-2)",
(24,-24) *{b_{r-1}}="b(r-1)",
(16,-20) *{b_{r}}="b(r)",
\ar "a(1)" ; "t" 
\ar "a(2)" ; "a(1)" 
\ar "a(3)" ; "a(2)" 
\ar @{.} "a(i)" ; "a(3)" 
\ar @{.} "a(p-2)" ; "a(i)" 
\ar "a(p-1)" ; "a(p-2)" 
\ar "a(p)" ; "a(p-1)" 
\ar "t" ; "a(p)" 
\ar "a(p2)" ; "a(p)" 
\ar "a(p3)" ; "a(p2)" 
\ar @{.} @(u,u) "a(pq-1)" ; "a(p3)" 
\ar "a(pq)" ; "a(pq-1)" 
\ar "a(p)" ; "a(pq)" 
\ar "b(1)" ; "t" 
\ar "b(2)" ; "b(1)" 
\ar "b(3)" ; "b(2)" 
\ar @{.} "b(i)" ; "b(3)" 
\ar @{.} "b(r-2)" ; "b(i)" 
\ar "b(r-1)" ; "b(r-2)" 
\ar "b(r)" ; "b(r-1)" 
\ar "t" ; "b(r)" 
\end{xy}
&
\begin{xy}
(-2,-12) *{}="a",
(2,-12) *{}="b",
\SelectTips{eu}{12} \ar @{->} "a" ; "b"
\end{xy}
&
\begin{xy}
(12,-12) *{t^{\prime}}="t",
(8,-20) *{a_{1}}="a(1)",
(0,-24) *{a_{2}}="a(2)",
(-8,-20) *{}="a(3)",
(-12,-12) *{}="a(i)",
(-8,-4) *{}="a(p-2)", 
(0,0) *{a_{p-1}}="a(p-1)",
(8,-4) *{a_{p}}="a(p)", 
(14,5) *{a_{p,q}}="a(pq)",
(14,10) *{}="a(pq-1)",
(4,10) *{}="a(p3)",
(4,5) *{a_{p,2}}="a(p2)", 
(16,-4) *{b_{1}}="b(1)",
(24,0) *{b_{2}}="b(2)",
(32,-4) *{}="b(3)",
(36,-12) *{}="b(i)",
(32,-20) *{}="b(r-2)",
(24,-24) *{b_{r-1}}="b(r-1)",
(16,-20) *{b_{r}}="b(r)",
\ar @{=>}  "a(p)" ; "t" 
\ar @{~>}  "t"; "a(pq)" 
\ar @{-->} @<3pt> "a(1)" ; "a(p)" 
\ar "a(2)" ; "a(1)" 
\ar "a(3)" ; "a(2)" 
\ar @{.} "a(i)" ; "a(3)" 
\ar @{.} "a(p-2)" ; "a(i)" 
\ar "a(p-1)" ; "a(p-2)" 
\ar "a(p)" ; "a(p-1)" 
\ar "a(p2)" ; "a(p)" 
\ar "a(p3)" ; "a(p2)" 
\ar @{.} @(u,u) "a(pq-1)" ; "a(p3)" 
\ar "a(pq)" ; "a(pq-1)" 
\ar @{=>} @<-1pt> "b(r)" ; "t" 
\ar @{~>} @<-4pt> "t" ; "b(r)" 
\ar @{-->} @<1pt> "b(1)" ; "b(r)" 
\ar "b(2)" ; "b(1)" 
\ar "b(3)" ; "b(2)" 
\ar @{.} "b(i)" ; "b(3)" 
\ar @{.} "b(r-2)" ; "b(i)" 
\ar "b(r-1)" ; "b(r-2)" 
\ar "b(r)" ; "b(r-1)" 
\end{xy}
\end{array}
\]
\item[(5)] If $t$, $a_{p}$ and $b_{r}$ belong to two cycles, then the reflection for $\Lambda$ at $t$ gives rise to the following transformation:
\[
\begin{array}{ccc}
\begin{xy}
(12,-12) *{t}="t",
(8,-20) *{a_{1}}="a(1)",
(0,-24) *{a_{2}}="a(2)",
(-8,-20) *{}="a(3)",
(-12,-12) *{}="a(i)",
(-8,-4) *{}="a(p-2)", 
(0,0) *{a_{p-1}}="a(p-1)",
(8,-4) *{a_{p}}="a(p)", 
(14,5) *{a_{p,q}}="a(pq)",
(14,10) *{}="a(pq-1)",
(4,10) *{}="a(p3)",
(4,5) *{a_{p,2}}="a(p2)", 
(16,-4) *{b_{1}}="b(1)",
(24,0) *{b_{2}}="b(2)",
(32,-4) *{}="b(3)",
(36,-12) *{}="b(i)",
(32,-20) *{}="b(r-2)",
(24,-24) *{b_{r-1}}="b(r-1)",
(16,-20) *{b_{r}}="b(r)",
(10,-29) *{b_{r,s}}="b(rs)",
(10,-34) *{}="b(rs-1)",
(20,-34) *{}="b(r3)",
(20,-29) *{b_{r,2}}="b(r2)",
\ar "a(1)" ; "t" 
\ar "a(2)" ; "a(1)" 
\ar "a(3)" ; "a(2)" 
\ar @{.} "a(i)" ; "a(3)" 
\ar @{.} "a(p-2)" ; "a(i)" 
\ar "a(p-1)" ; "a(p-2)" 
\ar "a(p)" ; "a(p-1)" 
\ar "t" ; "a(p)" 
\ar "a(p2)" ; "a(p)" 
\ar "a(p3)" ; "a(p2)" 
\ar @{.} @(u,u) "a(pq-1)" ; "a(p3)" 
\ar "a(pq)" ; "a(pq-1)" 
\ar "a(p)" ; "a(pq)" 
\ar "b(1)" ; "t" 
\ar "b(2)" ; "b(1)" 
\ar "b(3)" ; "b(2)" 
\ar @{.} "b(i)" ; "b(3)" 
\ar @{.} "b(r-2)" ; "b(i)" 
\ar "b(r-1)" ; "b(r-2)" 
\ar "b(r)" ; "b(r-1)" 
\ar "t" ; "b(r)" 
\ar "b(r2)" ; "b(r)" 
\ar "b(r3)" ; "b(r2)" 
\ar @{.} @(d,d) "b(rs-1)" ; "b(r3)" 
\ar "b(rs)" ; "b(rs-1)" 
\ar "b(r)" ; "b(rs)" 
\end{xy}
&
\begin{xy}
(-2,-12) *{}="a",
(2,-12) *{}="b",
\SelectTips{eu}{12} \ar @{->} "a" ; "b"
\end{xy}
&
\begin{xy}
(12,-12) *{t^{\prime}}="t",
(8,-20) *{a_{1}}="a(1)",
(0,-24) *{a_{2}}="a(2)",
(-8,-20) *{}="a(3)",
(-12,-12) *{}="a(i)",
(-8,-4) *{}="a(p-2)", 
(0,0) *{a_{p-1}}="a(p-1)",
(8,-4) *{a_{p}}="a(p)", 
(14,5) *{a_{p,q}}="a(pq)",
(14,10) *{}="a(pq-1)",
(4,10) *{}="a(p3)",
(4,5) *{a_{p,2}}="a(p2)", 
(16,-4) *{b_{1}}="b(1)",
(24,0) *{b_{2}}="b(2)",
(32,-4) *{}="b(3)",
(36,-12) *{}="b(i)",
(32,-20) *{}="b(r-2)",
(24,-24) *{b_{r-1}}="b(r-1)",
(16,-20) *{b_{r}}="b(r)",
(10,-29) *{b_{r,s}}="b(rs)",
(10,-34) *{}="b(rs-1)",
(20,-34) *{}="b(r3)",
(20,-29) *{b_{r,2}}="b(r2)",
\ar @{=>}  "a(p)" ; "t" 
\ar @{~>}  "t" ; "a(pq)" 
\ar @{-->} @<3pt> "a(1)" ; "a(p)" 
\ar "a(2)" ; "a(1)" 
\ar "a(3)" ; "a(2)" 
\ar @{.} "a(i)" ; "a(3)" 
\ar @{.} "a(p-2)" ; "a(i)" 
\ar "a(p-1)" ; "a(p-2)" 
\ar "a(p)" ; "a(p-1)" 
\ar "a(p2)" ; "a(p)" 
\ar "a(p3)" ; "a(p2)" 
\ar @{.} @(u,u) "a(pq-1)" ; "a(p3)" 
\ar "a(pq)" ; "a(pq-1)" 
\ar @{=>}  "b(r)" ; "t" 
\ar @{~>}  "t" ; "b(rs)" 
\ar @{-->} @<1pt> "b(1)" ; "b(r)" 
\ar "b(2)" ; "b(1)" 
\ar "b(3)" ; "b(2)" 
\ar @{.} "b(i)" ; "b(3)" 
\ar @{.} "b(r-2)" ; "b(i)" 
\ar "b(r-1)" ; "b(r-2)" 
\ar "b(r)" ; "b(r-1)" 
\ar "b(r2)" ; "b(r)" 
\ar "b(r3)" ; "b(r2)" 
\ar @{.} @(d,d) "b(rs-1)" ; "b(r3)" 
\ar "b(rs)" ; "b(rs-1)" 
\end{xy}
\end{array}
\]
\end{enumerate}

\end{theorem}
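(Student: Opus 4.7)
The plan is to assemble the transformation picture case by case from the data already established. By Proposition \ref{der_eq_Brauer_tree}, $\Gamma=\mathrm{End}_{\mathscr{K}(\mathrm{mod}\text{-}\Lambda)}(T^{\bullet}_{t})$ is a Brauer tree algebra with the same numerical invariants as $\Lambda$, so once the underlying quiver $(\Gamma_{0},\Gamma_{1})$ is determined, the relations $\sigma$ are forced. The vertex set is immediate: $\Gamma_{0}=(\Lambda_{0}\setminus\{t\})\cup\{t'\}$, where $t'$ corresponds to $E^{\bullet}_{t}$.

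To determine $\Gamma_{1}$, I split the arrows by endpoints. For $x,y\in\Lambda_{0}\setminus\{t\}$, we have $\mathrm{Hom}_{\Gamma}(P_{x},P_{y})=\mathrm{Hom}_{\mathscr{K}(\mathrm{mod}\text{-}\Lambda)}(P_{x},P_{y})=\mathrm{Hom}_{\Lambda}(P_{x},P_{y})$ because $P_{x},P_{y}$ are stalk complexes, so the subquiver of $\Gamma$ on $\Lambda_{0}\setminus\{t\}$ agrees with that of $\Lambda$ (with $t$ deleted), augmented only by the new maps $\theta_{a_{p}}$ (and $\theta_{b_{r}}$ when $t$ lies on two cycles) of Lemmas \ref{a_cycle}(3) and \ref{two_cycles}(3). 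Arrows \emph{into} $t'$ are provided by $\zeta_{a_{p}}$ (and $\zeta_{b_{r}}$) of Lemmas \ref{a_cycle}(1) and \ref{two_cycles}(1), and arrows \emph{out of} $t'$ by $\eta_{a_{p,q}}$ (and $\eta_{b_{r,s}}$) of Lemmas \ref{a_cycle}(2) and \ref{two_cycles}(2).

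That these arrows exhaust $\Gamma_{1}$ follows from Remark \ref{proj_Brauer}: in a Brauer tree algebra $\dim_{K}\mathrm{Hom}_{\Gamma}(P_{x},P_{y})\le 1$ for $x\ne y$, with equality exactly when $x$ and $y$ share a cycle. Hence it suffices to exhibit one nonzero map for each drawn arrow, which the lemmas do, and to verify $\mathrm{Hom}_{\mathscr{K}(\mathrm{mod}\text{-}\Lambda)}(P_{x},E^{\bullet}_{t})=0$ (equivalently, by Claim \ref{serre_duality}, $\mathrm{Hom}_{\mathscr{K}(\mathrm{mod}\text{-}\Lambda)}(E^{\bullet}_{t},P_{x})=0$) for those $x\in\Lambda_{0}\setminus\{t\}$ that are not endpoints of any $\zeta_{*}$ or $\eta_{*}$. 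These vanishings reduce to $\mathrm{Hom}_{\Lambda}(\mathrm{Cok}\,f_{t},P_{x})=0$, which is immediate from the uniserial composition series of $P_{t}$ recorded in Lemma \ref{self_ext} and is essentially the argument already used inside the proof of Lemma \ref{a_cycle}(2) for the subcase $q=1$.

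Finally, each of the five right-hand diagrams is read off from $(\Gamma_{0},\Gamma_{1})$: the maps $\zeta_{a_{p}}$ and $\eta_{a_{p,q}}$ (and their $b$-analogues) place $t'$ on the cycle containing $a_{p}$ and $a_{p,2},\ldots,a_{p,q}$, so $t'$ forms a digon with $a_{p}$ exactly when $a_{p}$ has no second cycle and otherwise inherits the role of $t$ on that second cycle; meanwhile $\theta_{a_{p}}:a_{1}\to a_{p}$ closes the old path $a_{p}\to a_{p-1}\to\cdots\to a_{1}$ into a cycle that no longer passes through $t$, and symmetrically for $b$. I expect the main obstacle to be purely bookkeeping: systematically verifying in each of the five configurations how the cycles through $t$, $a_{p}$, and $b_{r}$ are rewired, and making sure no arrow is silently added or dropped relative to the count allowed by the numerical invariants.
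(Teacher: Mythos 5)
Your proposal is correct and follows essentially the same route as the paper: the paper's proof of this theorem is simply the observation that the five cases follow from Lemmas \ref{a_cycle} and \ref{two_cycles}, with the surrounding discussion (Proposition \ref{der_eq_Brauer_tree}, Remark \ref{proj_Brauer}, and the identification $\Gamma_{0}=(\Lambda_{0}\setminus\{t\})\cup\{t'\}$) supplying exactly the counting and exhaustiveness argument you spell out. Your write-up merely makes explicit the bookkeeping that the paper leaves implicit, including the point that the lemmas produce maps in $\mathrm{rad}(\Gamma)\setminus\mathrm{rad}^{2}(\Gamma)$ rather than just nonzero homomorphisms.
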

\begin{proof}
(1), (2) follow by Lemma \ref{a_cycle} and (3), (4), (5) follow by Lemma \ref{two_cycles}. 
\end{proof}

Let $\Lambda$ be determined by a Brauer tree $B$ whose edges are identified with the vertices of $(\Lambda_{0}, \Lambda_{1}, \rho)$. We will describe a way to transform $B$ into a Brauer tree $B^{\prime}$ determining $\Gamma$. Consider first the case where $t$ is an end edge of $B$:
\[
\begin{xy}
(-15,0) *{}="x",
(-30,0) *{}="xx",
(0,0) *{\bullet}="a",
(20,0) *{\bullet}="b",
(10,8) *{\bullet}="i",
(20,8) *{\bullet}="ii",
(10,18) *{\bullet}="iii",
(17,8) *{}="i-i",
(10,15) *{}="i-ii",
(0,12) *{\bullet}="j",
(-10,8) *{\bullet}="k",
(-8,6) *{}="kk",
(-10,-8) *{\bullet}="l",
(-8,-6) *{}="ll",
(0,-12) *{\bullet}="m",
(10,-8) *{\bullet}="n",
\ar @{.} "x" ; "xx"
\ar @{-}^{t} "a" ; "b"
\ar @{-}_<<<{a_{p}} "i" ; "a"
\ar @{-}_<<<{a_{p-1}} "j" ; "a"
\ar @{-} "k" ; "a"
\ar @{-} "l" ; "a"
\ar @{-}^<<<{a_{2}} "m" ; "a"
\ar @{-}^<<<{a_{1}} "n" ; "a"
\ar @{-}_<<<<<<<<{a_{p,q}} "i" ; "ii"
\ar @{-}^{a_{p,2}} "i" ; "iii"
\ar @/_/ @{.} "kk" ; "ll"
\ar @/_/ @{.} "i-i" ; "i-ii"
\POS(-3,0)*+{x}
\POS(23,0)*+{y}
\POS(12,10)*+{z}
\end{xy}
\]
with $p, q \ge 1$, where $a_{p}=a_{p,1}$. Turn the edge $t$ anti-clockwise around the vertex $x$ and select the edge $a_{p}$ which $t$ first meets. Then select the vertex $z$ of the edge $a_{p}$ different from $x$. Add a new edge $t^{\prime}$ connecting the vertices $z$ and $y$, and remove the edge $t$. As a consequence, we get the following Brauer tree $B^{\prime}$:
\[
\begin{xy}
(-15,0) *{}="x",
(-30,0) *{}="xx",
(0,0) *{\bullet}="a",
(20,0) *{\bullet}="b",
(10,8) *{\bullet}="i",
(20,8) *{\bullet}="ii",
(10,18) *{\bullet}="iii",
(17,8) *{}="i-i",
(10,15) *{}="i-ii",
(0,12) *{\bullet}="j",
(-10,8) *{\bullet}="k",
(-8,6) *{}="kk",
(-10,-8) *{\bullet}="l",
(-8,-6) *{}="ll",
(0,-12) *{\bullet}="m",
(10,-8) *{\bullet}="n",
\ar @{.} "x" ; "xx"
\ar @{-}_{t^{\prime}} "i" ; "b"
\ar @{-}_<<<{a_{p}} "i" ; "a"
\ar @{-}_<<<{a_{p-1}} "j" ; "a"
\ar @{-} "k" ; "a"
\ar @{-} "l" ; "a"
\ar @{-}^<<<{a_{2}} "m" ; "a"
\ar @{-}^<<<{a_{1}} "n" ; "a"
\ar @{-}_<<<<<<<<{a_{p,q}} "i" ; "ii"
\ar @{-}^{a_{p,2}} "i" ; "iii"
\ar @/_/ @{.} "kk" ; "ll"
\ar @/_/ @{.} "i-i" ; "i-ii"
\POS(-3,0)*+{x}
\POS(23,0)*+{y}
\POS(12,10)*+{z}
\end{xy}
\]
Next, assume that $t$ is not an end edge of $B$:  
\[
\begin{xy}
(-15,0) *{}="x",
(-30,0) *{}="xx",
(55,0) *{}="y",
(70,0) *{}="yy",
(0,0) *{\bullet}="a",
(40,0) *{\bullet}="b",
(30,8) *{\bullet}="c",
(40,12) *{\bullet}="d",
(50,8) *{\bullet}="e",
(48,6) *{}="ee",
(50,-8) *{\bullet}="f",
(48,-6) *{}="ff",
(40,-12) *{\bullet}="g",
(30,-8) *{\bullet}="h",
(20,-8) *{\bullet}="hh",
(30,-18) *{\bullet}="hhh",
(23,-8) *{}="h-h",
(30,-15) *{}="h-hh",
(10,8) *{\bullet}="i",
(20,8) *{\bullet}="ii",
(10,18) *{\bullet}="iii",
(17,8) *{}="i-i",
(10,15) *{}="i-ii",
(0,12) *{\bullet}="j",
(-10,8) *{\bullet}="k",
(-8,6) *{}="kk",
(-10,-8) *{\bullet}="l",
(-8,-6) *{}="ll",
(0,-12) *{\bullet}="m",
(10,-8) *{\bullet}="n",
\ar @{.} "x" ; "xx"
\ar @{.} "y" ; "yy"
\ar @{-}^{t} "a" ; "b"
\ar @{-}^<<<{b_{1}} "c" ; "b"
\ar @{-}^<<<{b_{2}} "d" ; "b"
\ar @{-} "e" ; "b"
\ar @{-} "f" ; "b"
\ar @{-}_<<<{b_{r-1}} "g" ; "b"
\ar @{-}_<<<{b_{r}} "h" ; "b"
\ar @/^/ @{.} "ee" ; "ff"
\ar @{-}_<<<<{a_{p}} "i" ; "a"
\ar @{-}_<<<{a_{p-1}} "j" ; "a"
\ar @{-} "k" ; "a"
\ar @{-} "l" ; "a"
\ar @{-}^<<<{a_{2}} "m" ; "a"
\ar @{-}^<<<{a_{1}} "n" ; "a"
\ar @{-}_<<<<<<<<{a_{p,q}} "i" ; "ii"
\ar @{-}^{a_{p,2}} "i" ; "iii"
\ar @{-}_<<<<<<<{b_{r,s}} "h" ; "hh"
\ar @{-}^{b_{r,2}} "h" ; "hhh"
\ar @/_/ @{.} "kk" ; "ll"
\ar @/_/ @{.} "i-i" ; "i-ii"
\ar @/_/ @{.} "h-h" ; "h-hh"
\POS(-3,0)*+{x}
\POS(43,0)*+{y}
\POS(12,10)*+{z}
\POS(28,-10)*+{w}
\end{xy}
\]
with $p, q, r, s \ge 1$, where $a_{p}=a_{p,1}, b_{r}=b_{r,1}$. Turn the edge $t$ anti-clockwise around the vertex $x$ and select the edge $a_{p}$ which $t$ first meets. Then select the vertex $z$ of the edge $a_{p}$ different from $x$. Similarly, turn the edge $t$ anti-clockwise around the vertex $y$ and select the edge $b_{r}$ which $t$ first meets. Then select the vertex $w$ of the edge $b_{r}$ different from $y$. Add a new edge $t^{\prime}$ connecting the vertices $z$ and $w$, and remove the edge $t$. As a consequence, we get the following Brauer tree $B^{\prime}$: 
\[
\begin{xy}
(-15,0) *{}="x",
(-30,0) *{}="xx",
(55,0) *{}="y",
(70,0) *{}="yy",
(0,0) *{\bullet}="a",
(40,0) *{\bullet}="b",
(30,8) *{\bullet}="c",
(40,12) *{\bullet}="d",
(50,8) *{\bullet}="e",
(48,6) *{}="ee",
(50,-8) *{\bullet}="f",
(48,-6) *{}="ff",
(40,-12) *{\bullet}="g",
(30,-8) *{\bullet}="h",
(20,-8) *{\bullet}="hh",
(30,-18) *{\bullet}="hhh",
(23,-8) *{}="h-h",
(30,-15) *{}="h-hh",
(10,8) *{\bullet}="i",
(20,8) *{\bullet}="ii",
(10,18) *{\bullet}="iii",
(17,8) *{}="i-i",
(10,15) *{}="i-ii",
(0,12) *{\bullet}="j",
(-10,8) *{\bullet}="k",
(-8,6) *{}="kk",
(-10,-8) *{\bullet}="l",
(-8,-6) *{}="ll",
(0,-12) *{\bullet}="m",
(10,-8) *{\bullet}="n",
\ar @{.} "x" ; "xx"
\ar @{.} "y" ; "yy"
\ar @{-}^{t^{\prime}} "i" ; "h"
\ar @{-}^<<<{b_{1}} "c" ; "b"
\ar @{-}^<<<{b_{2}} "d" ; "b"
\ar @{-} "e" ; "b"
\ar @{-} "f" ; "b"
\ar @{-}_<<<{b_{r-1}} "g" ; "b"
\ar @{-}_<<<{b_{r}} "h" ; "b"
\ar @/^/ @{.} "ee" ; "ff"
\ar @{-}_<<<<{a_{p}} "i" ; "a"
\ar @{-}_<<<{a_{p-1}} "j" ; "a"
\ar @{-} "k" ; "a"
\ar @{-} "l" ; "a"
\ar @{-}^<<<{a_{2}} "m" ; "a"
\ar @{-}^<<<{a_{1}} "n" ; "a"
\ar @{-}_<<<<<<<<{a_{p,q}} "i" ; "ii"
\ar @{-}^{a_{p,2}} "i" ; "iii"
\ar @{-}_<<<<<<<{b_{r,s}} "h" ; "hh"
\ar @{-}^{b_{r,2}} "h" ; "hhh"
\ar @/_/ @{.} "kk" ; "ll"
\ar @/_/ @{.} "i-i" ; "i-ii"
\ar @/_/ @{.} "h-h" ; "h-hh"
\POS(-3,0)*+{x}
\POS(43,0)*+{y}
\POS(12,10)*+{z}
\POS(28,-10)*+{w}
\end{xy}
\]

\begin{corollary}\label{edge_reflection}
The Brauer tree $B^{\prime}$ determines $\Gamma$. 
\end{corollary}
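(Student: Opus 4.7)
The plan is to verify, case by case, that the Brauer tree algebra associated to $B^{\prime}$ coincides (up to Morita equivalence) with the quiver with relations $(\Gamma_{0}, \Gamma_{1}, \sigma)$ determined by Theorem \ref{reflection_quiver}. By Proposition \ref{der_eq_Brauer_tree}, $\Gamma$ is a Brauer tree algebra with the same numerical invariants as $\Lambda$, and since a Brauer tree algebra is determined up to Morita equivalence by its Brauer tree together with its planar embedding, it is enough to show that the cyclic order of edges around each vertex of $B^{\prime}$ agrees with the cycles read off from $(\Gamma_{0}, \Gamma_{1}, \sigma)$.

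Recall that each vertex $v$ of degree $d \ge 2$ in a Brauer tree contributes a $d$-cycle to the quiver of the associated algebra, with the cyclic ordering supplied by the planar embedding. The geometric procedure described has exactly three local effects on these cyclic orderings. At the vertex $x$, the edge $t$ is removed from the cyclic sequence, leaving $a_{1}, \ldots, a_{p}$ in their original cyclic order; at $y$, when $t$ is not an end edge, the edge $t$ is likewise removed, leaving $b_{1}, \ldots, b_{r}$; and at the vertex $z$ (and, in the two-cycle cases, also at $w$) the new edge $t^{\prime}$ is inserted into the cyclic order immediately after $a_{p}$ (respectively $b_{r}$). This last placement is precisely the content of the instruction to rotate $t$ anti-clockwise around $x$ (respectively $y$) until one first meets $a_{p}$ (respectively $b_{r}$).

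Translating these updated cyclic orderings back into quiver cycles and comparing with each of the five diagrams on the right-hand side of Theorem \ref{reflection_quiver} verifies the corollary. The arrow through $t$ that previously closed the cycle at $x$ is replaced by the dashed arrow $\theta_{a_{p}}\colon a_{1} \to a_{p}$ closing the shortened cycle $(a_{1}, \ldots, a_{p})$; when $q = 1$, the vertex $z$ becomes a vertex of the new 2-cycle $(a_{p}, t^{\prime})$, matching the doubled arrows $\zeta_{a_{p}}, \eta_{a_{p}}$; when $q > 1$, the placement of $t^{\prime}$ between $a_{p}$ and $a_{p,q}$ in the cyclic order at $z$ reproduces the arrows $\zeta_{a_{p}}\colon a_{p} \to t^{\prime}$ and $\eta_{a_{p,q}}\colon t^{\prime} \to a_{p,q}$; the situation at $w$ in cases (3)--(5) is entirely symmetric. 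The main subtlety is keeping control of where $t^{\prime}$ sits in the cyclic ordering at $z$ and $w$, but this is exactly the information encoded by the anti-clockwise rotation in the procedure, so the verification reduces to matching each of the five quiver transformations in Theorem \ref{reflection_quiver} with the corresponding edge moves in $B \rightsquigarrow B^{\prime}$, which is a direct inspection.
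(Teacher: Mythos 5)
Your proposal is correct and follows exactly the route the paper leaves implicit: Corollary \ref{edge_reflection} is stated without proof because it is just the translation, via the standard dictionary between cyclic orderings of edges around vertices of a Brauer tree and cycles of the associated quiver, of the five quiver transformations in Theorem \ref{reflection_quiver} into the tree surgery $B \rightsquigarrow B^{\prime}$. Your case-by-case matching (removal of $t$ from the cycles at $x$ and $y$, insertion of $t^{\prime}$ immediately after $a_{p}$ at $z$ and after $b_{r}$ at $w$, with $\theta$, $\zeta$, $\eta$ supplying the new arrows) is precisely that verification.
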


\begin{corollary}[cf. {\cite[Theorem 3.7]{AH1}}]\label{derived_eq_BT}
There exists a sequence of Brauer tree algebras $\Lambda=\Delta_{0}, \Delta_{1}, \cdots, \Delta_{l}$ such that $\Delta_{i+1}$ is the reflection of $\Delta_{i}$ at a suitable vertex for $0 \le i < l$ and $\Delta_{l}$ is a Brauer line algebra, i.e., the path algebra defined by the quiver 
\[
\xymatrix{
 1 \ar [r]<2pt>^{\alpha_{1}} & 2 \ar [r]<2pt>^{\alpha_{2}} \ar [l]<2pt>^{\beta_{1}} & \cdots \ar [l]<2pt>^{\beta_{2}}  \ar [r]<2pt>^{\alpha_{n-2}} & n-1 \ar [r]<2pt>^>>>>>{\alpha_{n-1}} \ar [l]<2pt>^{\beta_{n-2}}  & n \ar [l]<2pt>^>>>>>{\beta_{n-1}} 
}
\]
with relations
\[
\alpha_{i+1}\alpha_{i} = \beta_{i}\beta_{i+1}=0, \quad \alpha_{i}\beta_{i}=\beta_{i+1}\beta_{i}
\]
for $1 \le i < n-1$, where $n$ is the number of vertices of $(\Lambda_{0}, \Lambda_{1}, \rho)$. 
\end{corollary}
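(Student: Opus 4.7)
The plan is to prove the corollary by induction on the number of edges $n$ of the underlying Brauer tree $B$ of $\Lambda$, translating via Corollary \ref{edge_reflection} into the purely combinatorial statement that every Brauer tree (without exceptional vertex) can be reduced to a path by finitely many of the edge-moves described just before Corollary \ref{edge_reflection}. The base case $n = 1$ is immediate, since $B$ is already a line and one takes $l = 0$.

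The central input is the effect of a single reflection at a leaf edge $t$ with leaf endpoint $y$ and other endpoint $x$: the edge $t$ is removed from $x$ and reattached as $t'$ to the vertex $z$ sitting at the far end of the next edge $a_p$ anti-clockwise at $x$; consequently $\deg(x)$ decreases by one, $\deg(z)$ increases by one, and $y$ remains a leaf. Iterating this reflection ``pulls'' the leaf $y$ inward: as long as the current non-leaf endpoint of (the moved) $t$ has degree at least three, another reflection is available, so after finitely many steps the edge incident to $y$ has non-leaf endpoint of degree at most two. In other words, $y$ then dangles from a terminal path of the transformed tree.

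Once this terminal configuration is reached, I would strip off $y$ and its incident edge to obtain a Brauer tree with $n - 1$ edges, and apply the inductive hypothesis to extract a sequence of reflections converting the smaller tree into a line. Performing the same sequence in the full tree is legitimate because those reflections take place at vertices and in cyclic neighborhoods disjoint from $y$; the resulting tree is a line together with the single dangling edge at $y$, which is again a line. Composing this with the initial leaf-pulling reflections gives the required $\Delta_0, \Delta_1, \ldots, \Delta_l$.

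The main obstacle is controlling the intermediate ``pulling'' procedure, because each reflection raises $\deg(z)$ by one and could in principle create new high-degree branching elsewhere. To ensure termination and that the inductive step actually applies cleanly, I would choose the starting leaf $y$ as a leaf at maximal graph distance from a fixed interior vertex of $B$, and introduce a lexicographic monovariant such as the multiset of vertex degrees in decreasing order, or equivalently the multiset of leaf-to-root distances, verifying that each composite leaf-pulling step strictly decreases it. A practical alternative, closer to the spirit of \cite{AH1}, is to reflect successively along a path from a chosen end edge into the trunk of the tree, mimicking the classical Rickard--Schaps straightening procedure; in either case the combinatorics reduces to a finite monovariant argument in plane-tree land.
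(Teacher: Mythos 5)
Your route (induction on the number of edges: pull a chosen leaf inward by repeated reflections, strip it off, recurse) is genuinely different from the paper's, which instead iterates the observation that reflecting at $t$ shortens the cycles of the quiver through $t$, i.e.\ lowers the degrees of the endpoints of the edge $t$, until every vertex of the tree has degree at most two. The decisive gap in your version is the inductive step. You assert that the reflection sequence supplied by the inductive hypothesis for the stripped tree can be ``performed the same in the full tree'' because it avoids the neighbourhood of $y$; nothing guarantees this. The dangling edge $t^{\prime}$ sits at a vertex $v$ of the stripped tree, it changes the cyclic order at $v$, and --- more seriously --- it changes which case of Theorem \ref{reflection_quiver} governs a reflection at an edge incident to $v$: an edge that is an end edge of the stripped tree (so only one of its endpoints moves) belongs to two cycles in the full tree (so both endpoints move). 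Already for the $4$-star with one leaf pulled in, if the inductive sequence for the remaining $3$-star reflects at the edge attached to $v$, the stripped and full trees evolve differently. Moreover, even granting a faithful replay, ``a line plus one dangling edge at $v$'' is a line only when $v$ ends up as an \emph{endpoint} of the line, and your inductive hypothesis does not control where $v$ lands (the $3$-star can perfectly well be straightened so that $v$ is the middle vertex). To make this scheme work you would have to strengthen the induction to ``can be straightened into a line with a prescribed vertex as an end'' and prove compatibility of the two reflection sequences, neither of which is done.

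The termination of the leaf-pulling phase is also not established. ``Another reflection is available, so after finitely many steps the non-leaf endpoint has degree at most two'' is a non sequitur, and the monovariant you propose does not repair it: a single reflection can move the leaf edge from a degree-$2$ vertex onto a high-degree vertex, which strictly \emph{increases} the decreasing degree sequence lexicographically, while the ``composite'' version of the monovariant is only defined once you already know the pulling stops. What actually makes it stop is visible from the description preceding Corollary \ref{edge_reflection}: each reflection inserts $t^{\prime}$ into the corner of $z$ immediately following the edge just crossed, so the successive positions of the leaf edge trace consecutive corners of the boundary walk of the fixed tree $B \setminus \{t, y\}$; that closed walk visits every corner, in particular a corner at a leaf of $B \setminus \{t, y\}$, within $2(n-1)$ steps, and the process halts there. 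With that argument the pulling phase is sound, but the inductive step above still fails as written.
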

\begin{proof}
According to Theorem \ref{reflection_quiver}, the reflection for a Brauer tree algebra $\Lambda$ at a vertex $t$ reduces the length of cycles of $(\Lambda_{0}, \Lambda_{1}, \rho)$ including $t$.
\end{proof}

\begin{example}\label{reflection_ex}
We have the following transformations for Brauer trees:
\[
\begin{array}{ccccc}
\begin{xy}
(0,0) *{\bullet}="a",
(-10,-7) *{\bullet}="b",
(-6,-18) *{\bullet}="c",
(6,-18) *{\bullet}="d",
(0,-10) *{\bullet}="f",
(10,-7) *{\bullet}="e",

\ar @{-}^{1} "a" ; "f"
\ar @{-}^{2} "b" ; "f"
\ar @{-}^{3} "c" ; "f"
\ar @{-}^{4} "d" ; "f"
\ar @{-}^{5} "e" ; "f"
\end{xy}
&
\begin{xy}
(-7,-7) *{}="a",
(7,-7) *{}="b",
\SelectTips{eu}{12} \ar @{->}^{\text{reflection}}_{\text{at}\ 1} "a" ; "b"
\end{xy}
& 
\begin{xy}
(0,0) *{\bullet}="a",
(-10,-7) *{\bullet}="b",
(-6,-18) *{\bullet}="c",
(6,-18) *{\bullet}="d",
(0,-10) *{\bullet}="f",
(10,-7) *{\bullet}="e",

\ar @{-}_{1^{\prime}} "a" ; "b"
\ar @{-}^{2} "b" ; "f"
\ar @{-}^{3} "c" ; "f"
\ar @{-}^{4} "d" ; "f"
\ar @{-}^{5} "e" ; "f"
\end{xy}
&
\begin{xy}
(-1,-7) *{}="a",
(1,-7) *{}="b",
\ar @{=} "a" ; "b"
\end{xy}
& 
\begin{xy}
(-10,-7) *{\bullet}="a",
(0,-7) *{\bullet}="b",
(10,-7) *{\bullet}="c",
(20,-7) *{\bullet}="d",
(10,3) *{\bullet}="e",
(10,-17) *{\bullet}="f",

\ar @{-}^{1^{\prime}} "a" ; "b"
\ar @{-}^{2} "b" ; "c"
\ar @{-}_{5} "c" ; "e"
\ar @{-}_{4} "c" ; "d"
\ar @{-}_{3} "c" ; "f"
\end{xy}
\end{array}
\]
\[
\begin{array}{ccccc}
\begin{xy}
(-8,-7) *{\bullet}="a",
(0,-7) *{\bullet}="b",
(8,-7) *{\bullet}="c",
(16,-7) *{\bullet}="d",
(8,1) *{\bullet}="e",
(8,-15) *{\bullet}="f",
\ar @{-}^{1} "a" ; "b"
\ar @{-}^{2} "b" ; "c"
\ar @{-}_{3} "c" ; "e"
\ar @{-}_{4} "c" ; "d"
\ar @{-}_{5} "c" ; "f"
\end{xy}
&
\begin{xy}
(-7,-7) *{}="a",
(7,-7) *{}="b",
\SelectTips{eu}{12} \ar @{->}^{\text{reflection}}_{\text{at}\ 3} "a" ; "b"
\end{xy}
& 
\begin{xy}
(-8,-7) *{\bullet}="a",
(0,-7) *{\bullet}="b",
(8,-7) *{\bullet}="c",
(16,-7) *{\bullet}="d",
(8,1) *{\bullet}="e",
(8,-15) *{\bullet}="f",
\ar @{-}^{1} "a" ; "b"
\ar @{-}_{2} "b" ; "c"
\ar @{-}^{3^{\prime}} "b" ; "e"
\ar @{-}^{4} "c" ; "d"
\ar @{-}^{5} "c" ; "f"
\end{xy}
&
\begin{xy}
(-1,-7) *{}="a",
(1,-7) *{}="b",
\ar @{=} "a" ; "b"
\end{xy}
& 
\begin{xy}
(-8,0) *{\bullet}="a",
(-8,-14) *{\bullet}="b",
(-1,-7) *{\bullet}="c",
(9,-7) *{\bullet}="d",
(16,0) *{\bullet}="e",
(16,-14) *{\bullet}="f",
\ar @{-}^{3^{\prime}} "a" ; "c"
\ar @{-}_{1} "b" ; "c"
\ar @{-}^{2} "c" ; "d"
\ar @{-}^{4} "d" ; "e"
\ar @{-}_{5} "d" ; "f"
\end{xy}
\end{array}
\]
\[
\begin{array}{ccccc}
\begin{xy}
(-8,0) *{\bullet}="a",
(-8,-14) *{\bullet}="b",
(-1,-7) *{\bullet}="c",
(9,-7) *{\bullet}="d",
(16,0) *{\bullet}="e",
(16,-14) *{\bullet}="f",
\ar @{-}^{1} "a" ; "c"
\ar @{-}_{2} "b" ; "c"
\ar @{-}^{3} "c" ; "d"
\ar @{-}^{4} "d" ; "e"
\ar @{-}_{5} "d" ; "f"
\end{xy}
&
\begin{xy}
(-7,-7) *{}="a",
(7,-7) *{}="b",
\SelectTips{eu}{12} \ar @{->}^{\text{reflection}}_{\text{at}\ 1} "a" ; "b"
\end{xy}
& 
\begin{xy}
(-8,0) *{\bullet}="a",
(-8,-14) *{\bullet}="b",
(-1,-7) *{\bullet}="c",
(9,-7) *{\bullet}="d",
(16,0) *{\bullet}="e",
(16,-14) *{\bullet}="f",
\ar @{-}_{1^{\prime}} "a" ; "b"
\ar @{-}_{2} "b" ; "c"
\ar @{-}^{3} "c" ; "d"
\ar @{-}^{4} "d" ; "e"
\ar @{-}_{5} "d" ; "f"
\end{xy}
&
\begin{xy}
(-1,-7) *{}="a",
(1,-7) *{}="b",
\ar @{=} "a" ; "b"
\end{xy}
& 
\begin{xy}
(-7,-7) *{\bullet}="a",
(0,-7) *{\bullet}="b",
(7,-7) *{\bullet}="c",
(14,-7) *{\bullet}="d",
(21,-7) *{\bullet}="e",
(14,1) *{\bullet}="f",
\ar @{-}_{1^{\prime}} "a" ; "b"
\ar @{-}_{2} "b" ; "c"
\ar @{-}_{3} "c" ; "d"
\ar @{-}_{5} "d" ; "e"
\ar @{-}_{4} "d" ; "f"
\end{xy}
\end{array}
\]
\[
\begin{array}{ccccc}
\begin{xy}
(-6.5,-7) *{\bullet}="a",
(0,-7) *{\bullet}="b",
(6.5,-7) *{\bullet}="c",
(13,-7) *{\bullet}="d",
(19.6,-7) *{\bullet}="e",
(13,1) *{\bullet}="f",
\ar @{-}_{1} "a" ; "b"
\ar @{-}_{2} "b" ; "c"
\ar @{-}_{3} "c" ; "d"
\ar @{-}_{5} "d" ; "e"
\ar @{-}_{4} "d" ; "f"
\end{xy}
&
\begin{xy}
(-7,-7) *{}="a",
(7,-7) *{}="b",
\SelectTips{eu}{12} \ar @{->}^{\text{reflection}}_{\text{at}\ 4} "a" ; "b"
\end{xy}
& 
\begin{xy}
(-7,-7) *{\bullet}="a",
(0,-7) *{\bullet}="b",
(7,-7) *{\bullet}="c",
(14,-7) *{\bullet}="d",
(21,-7) *{\bullet}="e",
(14,1) *{\bullet}="f",
\ar @{-}_{1} "a" ; "b"
\ar @{-}_{2} "b" ; "c"
\ar @{-}_{3} "c" ; "d"
\ar @{-}^{4^{\prime}} "c" ; "f"
\ar @{-}_{5} "d" ; "e"
\end{xy}
&
\begin{xy}
(-1,-7) *{}="a",
(1,-7) *{}="b",
\ar @{=} "a" ; "b"
\end{xy}
& 
\begin{xy}
(-7,-7) *{\bullet}="a",
(0,-7) *{\bullet}="b",
(7,-7) *{\bullet}="c",
(14,-7) *{\bullet}="d",
(21,-7) *{\bullet}="e",
(7,1) *{\bullet}="f",
\ar @{-}_{1} "a" ; "b"
\ar @{-}_{2} "b" ; "c"
\ar @{-}_{3} "c" ; "d"
\ar @{-}_{5} "d" ; "e"
\ar @{-}^{4^{\prime}} "c" ; "f"
\end{xy}
\end{array}
\]
\[
\begin{array}{ccccc}
\begin{xy}
(-6.5,-7) *{\bullet}="a",
(0,-7) *{\bullet}="b",
(6.5,-7) *{\bullet}="c",
(13,-7) *{\bullet}="d",
(19.5,-7) *{\bullet}="e",
(6.5,1) *{\bullet}="f",
\ar @{-}_{1} "a" ; "b"
\ar @{-}_{2} "b" ; "c"
\ar @{-}_{3} "c" ; "d"
\ar @{-}_{4} "d" ; "e"
\ar @{-}^{5} "c" ; "f"
\end{xy}
&
\begin{xy}
(-7,-7) *{}="a",
(7,-7) *{}="b",
\SelectTips{eu}{12} \ar @{->}^{\text{reflection}}_{\text{at}\ 3} "a" ; "b"
\end{xy}
& 
\begin{xy}
(-6.5,-7) *{\bullet}="a",
(0,-7) *{\bullet}="b",
(6.5,-7) *{\bullet}="c",
(13,-7) *{\bullet}="d",
(19.5,-7) *{\bullet}="e",
(6.5,1) *{\bullet}="f",
\ar @{-}_{1} "a" ; "b"
\ar @{-}_{2} "b" ; "c"
\ar @{-}^{3^{\prime}} "f" ; "d"
\ar @{-}_{4} "d" ; "e"
\ar @{-}^{5} "c" ; "f"
\end{xy}
&
\begin{xy}
(-1,-7) *{}="a",
(1,-7) *{}="b",
\ar @{=} "a" ; "b"
\end{xy}
& 
\begin{xy}
(-6,-7) *{\bullet}="a",
(0,-7) *{\bullet}="b",
(6,-7) *{\bullet}="c",
(12,-7) *{\bullet}="d",
(18,-7) *{\bullet}="e",
(24,-7) *{\bullet}="f",
\ar @{-}_{1} "a" ; "b"
\ar @{-}_{2} "b" ; "c"
\ar @{-}_{5} "c" ; "d"
\ar @{-}_{3^{\prime}} "d" ; "e"
\ar @{-}_{4} "e" ; "f"
\end{xy}
\end{array}
\]
\end{example}


\vspace{5pt}
\begin{flushleft}
Tokyo Keizai University \\
1-7-34, Minami-cho, Kokubunji-shi, Tokyo 185-8502, Japan \\
{\it E-mail address} : {\rm abeh@tku.ac.jp} 
\end{flushleft}

\end{document}